\def\CC{{\mathbb C}}
\def\QQ{{\mathbb Q}}
\def\PP{{\mathbb P}}
\def\QQ{{\mathbb Q}}
\def\RR{{\mathbb R}}
\def\Qbar{\overline{\mathbb Q}}
\def\0{{\mathbf 0}}
\def\1{{\mathbf 1}}
\def\GL{\mathrm{GL}}
\def\min{\mathrm{min}}
\def\Do{\mathrm{Do}}
\title{Quantitative height bounds under splitting conditions}
\author[Fili]{Paul A. Fili}
\address{Department of Mathematics\\ Oklahoma State University, Stillwater, OK 74078}
\email{fili@post.harvard.edu}
\author[Pottmeyer]{Lukas Pottmeyer}
\address{Fakultät f\"ur Mathematik\\ Universit\"at Duisburg-Essen, D-45117 Essen}
\email{lukas.pottmeyer@uni-due.de}
\subjclass[2010]{11G50, 11R06, 37P30, 31A15}
\keywords{Weil height, totally real, totally $p$-adic, splitting conditions.}
\date{\today}
\newtheorem{thm}{Theorem}
\newtheorem{prop}[thm]{Proposition}
\newtheorem{lemma}[thm]{Lemma}
\newtheorem*{thm*}{Theorem}
\newtheorem*{alg*}{Algorithm}
\newtheorem*{lemma*}{Lemma}
\theoremstyle{remark}
\newtheorem*{rmk*}{Remark}
\newtheorem*{notation*}{Notation}
\newtheorem{example}[thm]{Example}
\newtheorem*{example*}{Example}
\theoremstyle{definition}
\newtheorem*{defn*}{Definition}
\newcommand{\mybf}{\mathbb}
\newcommand{\bP}{\mybf{P}}
\newcommand{\bR}{\mybf{R}}
\newcommand{\bC}{\mybf{C}}
\newcommand{\bN}{\mybf{N}}
\newcommand{\bQ}{\mybf{Q}}
\newcommand{\bA}{\mybf{A}}
\newcommand{\cO}{\mathcal{O}}
\newcommand{\al}{\alpha}
\providecommand{\abs}[1]{\lvert#1\rvert}
\newcommand{\ON}[1]{\operatorname{#1}}
\newcommand{\ra}{\rightarrow}
\newcommand{\ep}{\epsilon}
\newcommand{\p}{\partial}
\newcommand{\Diag}{\mathrm{Diag}}
\def\talltareesidedbox#1{\setbox0=\hbox{$#1$}\dimen0=\wd0 \advance\dimen0 by3pt\rlap{\hbox{\vrule height10pt width.4pt
 depth2pt \kern-.4pt\vrule height10.4pt width\dimen0 depth-10pt\kern-.4pt \vrule height10pt width.4pt depth2pt}}
 \relax \hbox to\dimen0{\hss$#1$\hss}}%s\ignorespaces}
\def\tareesidedbox#1{\setbox0=\hbox{$#1$}\dimen0=\wd0 \advance\dimen0 by3pt\rlap{\hbox{\vrule height8pt width.4pt
 depth2pt \kern-.4pt\vrule height8.4pt width\dimen0 depth-8pt\kern-.4pt \vrule height8pt width.4pt depth2pt}}
\relax \hbox to\dimen0{\hss$#1$\hss}}%s\ignorespaces}
\def\shorttareesidedbox#1{\setbox0=\hbox{$#1$}\dimen0=\wd0 \advance\dimen0 by3pt\rlap{\hbox{\vrule height7pt width.4pt
 depth2pt \kern-.4pt\vrule height7.4pt width\dimen0 depth-7pt\kern-.4pt \vrule height7pt width.4pt depth2pt}}
 \relax \hbox to\dimen0{\hss$#1$\hss}}%s\ignorespaces}
\newcommand{\sP}{\mathsf{P}}
\newcommand{\sA}{\mathsf{A}}
\begin{document}

 \begin{abstract}
In an earlier work, the first author and Petsche used potential theoretic techniques to establish a lower bound for the height of algebraic numbers that satisfy splitting conditions, such as being totally real or $p$-adic, improving on earlier work of Bombieri and Zannier in the totally $p$-adic case. These bounds applied as the degree of the algebraic number over the rationals tended towards infinity. In this paper, we use discrete energy approximation techniques on the Berkovich projective line to make the dependence on the degree in these bounds explicit, and we establish lower bounds for algebraic numbers which depend only on local properties of the numbers. 
 \end{abstract}

\thanks{The authors would like to thank the Erwin Schr\"odinger International Institute for Mathematical Physics at the University of Vienna for their support during the `Heights and Diophantine geometry, group theory and additive combinatorics' workshop during which some of this research took place. The authors would also like to thank Igor Pritsker for numerous helpful conversations, particularly regarding the proof of Theorem \ref{thm:robin-constant}. Lastly, the authors would like to thank the anonymous referee for a very careful reading of the manuscript with numerous helpful comments, including suggesting improvements to Theorems \ref{thm:robin-constant-for-O-L}, \ref{thm:robin-constant-for-O-L-cross}, and \ref{thm:robin-constant-for-L-p}. The second author was supported by the DFG-Projekt \textit{Heights and unlikely intersections} HA~6828/1-1.}

\maketitle
%\tableofcontents

\section{Introduction}
In a previous work of the first author and Petsche \cite{F-P-EIOLF}, it was established that if $S$ is a set of rational places and $L_S$ denotes an extension of $\bQ$ containing all algebraic numbers whose Galois conjugates all lie in the local fields $L_p\neq \CC$ for each $p\in S$, then for all $\al\in L_S$,
\begin{equation}\label{SimpleGlobalThmBound}
h(\alpha)\geq \frac{1}{2} \sum_{p\in S} I(\mu_{L_p}) + o(1)\quad \text{as}\quad d=[\bQ(\al):\bQ]\ra \infty.
\end{equation}
The local energies $I(\mu_{L_p})$ appearing above arise naturally as the solutions to a certain energy minimization problem for local fields, and the resulting bounds improved on earlier constants obtained by Bombieri and Zannier \cite{BombieriZannierNote}.

The goal of this note is to establish a bound in which the dependence on the degree is made explicit. We begin by fixing some notation to be used throughout this paper:
\begin{center}
\begin{tabularx}{\linewidth}{rX}
 $S\subseteq M_\bQ$ & will be a given set of rational primes, possibly containing the archimedean prime.\\
 $L_p/\bQ_p$ & will be a given finite normal extension for each $p\in S$.\\
 $L_S$ & will denote the field of all algebraic numbers all of whose Galois conjugates lie in $L_p$ for each $p\in S$.
\end{tabularx}
\end{center}
\noindent Further, for each finite prime $p\in S$, we will denote by:
\begin{center}
\begin{tabular}{rl}
 $e=e_p$ & the ramification degree of $L_p/\bQ_p$,\\
 $f=f_p$ & the inertial degree of $L_p/\bQ_p$,\\
 $q = p^f$ & the order of the residue field of $L_p$, and\\
 $O_{L_p}$ & the ring of integers of $L_p$.
\end{tabular}
\end{center}
\noindent To ease notation and avoid too many subscripts, we will leave tacit the dependence of $e,f,q$ on the prime $p\in S$ below. Our first result is the following:
\begin{thm}\label{thm:1}
 Let $L_S$ be as above and $\al\in L_S$ with $d=[\bQ(\al):\bQ]>1$. Set 
\[
 V_{\infty} = \begin{dcases} 0 & \text{ if } \infty \notin S \\
               \max\left\{\frac{7\zeta(3)}{4\pi^2} - \frac{0.95 d + 2}{2 d^2} - \frac{(d-2) \log{d}}{2 d (d-1)},0\right\} & \text{ if } \infty \in S
              \end{dcases}
\]
where $\zeta(3) = \sum_{n\geq 1} \nicefrac{1}{n^3}$. 
% and $c_1=0.67886\ldots$ and $c_2=1.12785\ldots$ are effective absolute constants computed below in the proof of Theorem \ref{thm:robin-constant}.
Then we have
\begin{equation}\label{eqn:bound}
 h(\al) \geq - \frac{\log d}{2(d-1)} + V_{\infty} + \frac{1}{2} \sum_{\substack{p\in S\\ p\neq \infty ,\ p^{1/e} < d}} \left((1 - \frac{1}{q^{n_p}}) \frac{q\log{p}}{e(q^2 -1)} - \frac{\log d}{d} \right)
\end{equation}
where for each $p$,
\[
 n_p = \left\lfloor \frac{e \log d}{\log p}\right\rfloor .
\]
If in addition $\al$ is an algebraic integer, then we have
\begin{equation}\label{eqn:bound-integer}
 h(\al) \geq - \frac{\log d}{2(d-1)} + V_{\infty} + \frac{1}{2} \sum_{\substack{p\in S\\ p \neq \infty ,\ p^{1/e} < d}} \left((1 - \frac{1}{q^{n_p}}) \frac{\log{p}}{e(q -1)} - \frac{\log d}{d} \right),
\end{equation}
and if $\al$ is an algebraic unit, then 
\begin{equation}\label{eqn:bound-unit}
 h(\al) \geq - \frac{\log d}{2(d-1)} + V_{\infty} + \frac{1}{2} \sum_{\substack{p\in S\\ p \neq \infty ,\ p^{1/e} < d}} \left((1 - \frac{1}{q^{n_p}}) \frac{q\log{p}}{e(q -1)^2} - \frac{\log d}{d} \right).
\end{equation}
\end{thm}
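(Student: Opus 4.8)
The plan is to reduce the global height bound to a sum of local contributions via the product formula, then bound each local contribution using a discrete energy approximation on the Berkovich projective line $\mathsf{P}^1_{\Berk}$ over the completion $\CC_p$ (or over $\CC$ at the archimedean place). Concretely, write $h(\al) = \frac{1}{d}\sum_{v} \log\house{\text{(resultant/discriminant data)}}$ in terms of the full set of Galois conjugates $\al_1,\dots,\al_d$ of $\al$. The standard starting point is the identity relating the height of $\al$ to a sum over places of pairwise logarithmic interactions of the conjugates, together with the discriminant term; equivalently, for the $d$ conjugates one has a lower bound of the shape
\[
h(\al) \;\ge\; -\frac{\log d}{2(d-1)} \;+\; \frac{1}{2}\sum_{v\in M_\bQ} \Big( \text{local energy of the conjugate configuration at } v\Big),
\]
where the $-\tfrac{\log d}{2(d-1)}$ absorbs the contribution of the discriminant/leading-coefficient normalization uniformly (this is where the degree enters the "error" term). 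This reduces everything to giving, for each place $p\in S$, a lower bound on the discrete energy $\frac{1}{d^2}\sum_{i\ne j}\log\house{\al_i-\al_j}^{-1}_p$ when the points $\al_i$ are constrained to lie in $L_p$ (or $O_{L_p}$, or $O_{L_p}^\times$ in the integer/unit cases).

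**Next**, for a fixed finite prime $p\in S$ I would invoke the solution of the local energy minimization problem over $L_p$, whose optimal value is $I(\mu_{L_p})$, and then \emph{quantify} the rate of convergence of the $d$-point discrete energy to this equilibrium value. The key tool is a discrete energy approximation lemma on $\mathsf{P}^1_{\Berk}$: for any probability measure supported in the relevant compact set (the Berkovich closure of $L_p$, or $O_{L_p}$, or $O_{L_p}^\times$), the discrete energy of any $d$-point configuration exceeds the continuous equilibrium energy minus an explicit term of size $O\!\big(\tfrac{\log d}{d}\big)$, coming from a Fekete-type estimate. The explicit local equilibrium energies — for $O_{L_p}$ giving $\tfrac{\log p}{e(q-1)}$-type constants, for $L_p$ the version with the extra factor $\tfrac{q}{q^2-1}$, and for units $O_{L_p}^\times$ the factor $\tfrac{q}{(q-1)^2}$ — are computed from the ultrametric structure: points in $O_{L_p}$ cluster into $q$ residue balls, each of which subdivides again, yielding a geometric series whose truncation at level $n_p = \lfloor e\log d/\log p\rfloor$ accounts for the factors $(1-q^{-n_p})$ and $(1-(q-2)q^{-n_p})$; the truncation level $n_p$ is exactly the depth at which $d$ points can no longer be spread one-per-ball, and the restriction $p^{1/e}<d$ ensures $n_p\ge 1$ so the prime actually contributes. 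At the archimedean place, the analogous continuous equilibrium constant for the totally real case is the Robin constant $\tfrac{7\zeta(3)}{4\pi^2}$ of Theorem \ref{thm:robin-constant}, and the discrete approximation error there is the more delicate term involving $c_1$, $c_2$, and the $(7d-8)\log d$ factor; truncating each of these at $0$ via $\max\{\cdot,0\}$ and $V_\infty$ just reflects that a negative local bound is useless and can be dropped.

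**Finally**, I would assemble: sum the local lower bounds over $p\in S$, add the archimedean term $V_\infty$, and subtract the single global normalization error $\tfrac{\log d}{2(d-1)}$, obtaining \eqref{eqn:bound}; the integer and unit refinements \eqref{eqn:bound-integer} and \eqref{eqn:bound-unit} follow identically but with the support of the local configuration shrunk to $O_{L_p}$ respectively $O_{L_p}^\times$, changing only the local equilibrium constant (and the combinatorial truncation constant $q-2$ in place of $1$, reflecting that one residue class is excluded for units and the $0$ residue class behaves specially for integers). \textbf{The main obstacle} I anticipate is making the discrete-to-continuous energy approximation genuinely effective with the clean $\tfrac{\log d}{d}$ dependence claimed: one must carefully track the Fekete/transfinite-diameter estimate on the Berkovich line, control the contribution of the diagonal terms $\house{\al_i-\al_i}$ (which is where the $-\tfrac{\log d}{2(d-1)}$ comes from and must be uniform in the field $L_p$), and, at the archimedean place, push through the explicit constants $c_1,c_2$ from the proof of Theorem \ref{thm:robin-constant} — the archimedean discrete energy estimate for the totally real equilibrium measure is by far the most technically involved piece, and isolating it in the term $V_\infty$ with the $\max\{\cdot,0\}$ is the cleanest way to keep the statement uniform.
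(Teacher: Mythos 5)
Your overall strategy — decompose $h(\al)$ via the adelic energy formula, bound each local energy by a Robin constant, charge $O(\log d/d)$ for the discrete-to-continuous approximation — is indeed the paper's, but two points need repair before this is a proof.

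First, the $-\frac{\log d}{2(d-1)}$ term is not a "uniform global normalization absorbing discriminant data." In the Favre--Rivera-Letelier identity $h(\al)=\frac12\sum_v(\lambda_v-[\al],\lambda_v-[\al])_v$ (an exact equality, not an inequality), the local pairings at every finite $v$ are nonnegative by \cite[Lemma 5.4]{FRL}; the only place that can contribute negatively is $\infty$, where $-\frac{\log d}{d-1}$ is Baker's reformulation of Mahler's discriminant inequality. Your argument needs to invoke that finite-place nonnegativity in order to legitimately drop the places outside $S$ — as phrased it does not, and without it the reduction to $\sum_{p\in S}$ is unjustified.

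Second, and more substantively, the "Fekete-type estimate" you gesture at does not supply the actual mechanism. Since $[\al]$ is a sum of Dirac masses and $\bP^1(L_p)$ has Berkovich capacity zero in $\sP^1(\bC_p)$, the $\delta$-Robin constant of $\bP^1(L_p)$ (or $O_{L_p}$, or $O_{L_p}^\times$) cannot be applied to a discrete configuration directly, and a transfinite-diameter/Fekete comparison on $\bP^1(L_p)$ gives nothing quantitative for arbitrary Galois orbits. The paper's key step is a \emph{regularization} of measures: replace each $\delta_z$ by a Dirac mass at the Berkovich point $\zeta_{z,\ep}$ (resp.\ by circle measure of radius $\ep$ at $\infty$), pay a cost $\log\ep/d$ via \cite[Prop.\ 9]{FRL} (resp.\ Proposition~\ref{prop:arch-approx}), and then — crucially — prove new lower bounds on the $\delta$-Robin constant of the $\ep$-tubes $\pi_\ep^{-1}(\pi_\ep(\bP^1(L_p)))$, $\pi_\ep^{-1}(\pi_\ep(O_{L_p}))$, $\pi_\ep^{-1}(\pi_\ep(O_{L_p}^\times))$ and the archimedean strip $E_\ep$ (Theorems~\ref{thm:robin-constant-for-O-L}--\ref{thm:robin-constant-for-L-p} and \ref{thm:robin-constant}), because the regularized measure is supported in the tube, \emph{not} in $\bP^1(L_p)$ itself. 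The truncation index $n_p$ then comes from matching $\ep=1/d$ against the scale $|\pi^{n_p}|$ in those tube estimates, not from a combinatorial "one point per residue ball" count. Without the regularization step and the tube Robin-constant theorems, your outline does not produce the quantitative factors $(1-q^{-n_p})$, $(1-(q-2)q^{-n_p})$, or the constants $c_1,c_2$ and $V_\infty$.
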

\noindent Theorem \ref{thm:1} should be compared to Bombieri and Zannier \cite[Theorem 3]{BombieriZannierNote}. Note that the result of Bombieri and Zannier does not cover the case where $\infty\in S$, but uses a form of Mahler's inequality much as our result does in \eqref{eqn:bound}. We note that in case $\infty \in S$, $V_{\infty}$ is positive if and only if $d>6$, i.e. only for algebraic numbers of degree $> 6$.

 With trivial modifications, Theorem \ref{thm:1} can be stated over an arbitrary base number field $K/\bQ$. This statement is given as Theorem \ref{thm:general-thm-1} below.

Note that the height bounds from Theorem \ref{thm:1} are negative for algebraic numbers of small degree $d$. But as there are only finitely many algebraic numbers of bounded height and bounded degree, we recover the aforementioned result of the first author and Petsche:
\[
\liminf_{\al \in L_S} h(\al) \geq \sum_{p\in S\setminus\{\infty\}} \frac{q\log{p}}{e(q^2 -1)} + \begin{cases} \frac{7\zeta(3)}{4\pi^2} &\text{ if } \infty \in S \\ 0 &\text{ else } \end{cases}.
\]
Although our bounds are trivial in some cases, we can use Theorem \ref{thm:1} to give absolute lower bounds for non roots of unity in $L_S^\times$. As such lower bounds depend mainly on the smallest prime in $S$, we will focus on the case $\vert S \vert =1$. Note that we have trivially $L_S \subseteq L_{S'}$, whenever $S' \subseteq S$. In case $S=\{\infty\}$, Schinzel \cite{SchinzelTotReal} gave the sharp lower bound $\nicefrac{1}{2} \log\,(1+\sqrt{5}) /2$.

\begin{thm}\label{thm:2}
 Let $p$ be a rational prime and $\al\in L_{\{p\}}^\times$, not a root of unity. Then we have
\[
 h(\al)\geq \begin{dcases} \frac{\log{p}}{13(q-1) e \log\left(\frac{5(q-1)e}{\log{p}}\right)^4} & \text{ if } e \geq 2 \\
 \frac{\log(2) \log(p)}{5 (q+1) \log\left(\frac{5(q+1)}{\log p}\right)} & \text{ if } e=1 \end{dcases} 
\]
\end{thm}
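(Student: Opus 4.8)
The plan is to derive Theorem \ref{thm:2} as a consequence of Theorem \ref{thm:1} specialized to $S = \{p\}$, where now $\infty \notin S$ so that $V_\infty = 0$ and the bound \eqref{eqn:bound} collapses to a single local term. The strategy is to choose the degree parameter $d$ optimally: for fixed $p$, the right-hand side of \eqref{eqn:bound} behaves, for $d$ large, like a positive constant $C_p$ (essentially $\tfrac{1}{2}\cdot\tfrac{q\log p}{e(q^2-1)}$, since $q^{-n_p} \to 0$) minus a term of order $(\log d)/d$; this is a lower bound valid for every $\al \in L_{\{p\}}^\times$ of that degree. To get an absolute bound I would argue by cases on $\deg(\al) = d$: if $d$ is large, \eqref{eqn:bound} already gives a bound comparable to $C_p$; if $d$ is small, I would invoke a complementary lower bound for algebraic numbers of bounded degree. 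For the small-degree regime the natural tool is a Dobrowolski-type or even just a Schinzel--Zassenhaus / Blanksby--Montgomery-type bound, but in fact for $\al$ totally $p$-adic of small degree one expects the local contribution at $p$ alone to force $h(\al) \gg (\log p)/d$ by an elementary argument (the product formula together with the fact that all conjugates of $\al$ lie in $L_p$, hence have bounded $p$-adic absolute value).

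First I would write out \eqref{eqn:bound} with $S = \{p\}$: it reads
\[
 h(\al) \;\geq\; -\frac{\log d}{2(d-1)} + \frac12\left((1 - q^{-n_p})\,\frac{q\log p}{e(q^2-1)} - \frac{\log d}{d}\right),
\]
valid as long as $p^{1/e} < d$, i.e. $d > p^{1/e}$. Abbreviate $A = \tfrac{q\log p}{e(q^2-1)}$ (for the unit case, $A = \tfrac{q\log p}{e(q-1)^2}$, and for the integer case $A = \tfrac{\log p}{e(q-1)}$, with the corresponding replacement of $q^{-n_p}$ by $(q-2)q^{-n_p}$). The bound then has the shape $h(\al) \geq \tfrac12 A - \tfrac12 A\,q^{-n_p} - \tfrac{\log d}{2(d-1)} - \tfrac{\log d}{2d} \geq \tfrac12 A - \tfrac12 A\,q^{-n_p} - \tfrac{\log d}{d-1}$. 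Since $q^{n_p} = q^{\lfloor e\log d/\log p\rfloor} \geq q^{e\log d/\log p - 1} = q^{-1} d^{e\log q/\log p} = q^{-1} d^{f}$ (using $\log q = f\log p$), the middle term is at most $\tfrac12 A q d^{-f} \leq \tfrac12 A q d^{-1}$, which is absorbed into the error. So for $d$ large enough, $h(\al) \geq \tfrac14 A$, say, and more generally $h(\al) \geq \tfrac12 A - c\,(\log d)/d$ for an explicit $c$.

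Next I would optimize over $d$. Given a number $\al \in L_{\{p\}}^\times$ that is not a root of unity, its degree $d = \deg \al$ is \emph{not} under our control — rather, the bound must hold for whatever $d$ is. So the argument is: the function $d \mapsto \tfrac12 A - c(\log d)/d$ is increasing for $d$ large, so if $d \geq D_0$ for a suitable threshold $D_0$ we are done with a bound of size $\asymp A$; and if $d < D_0$, we need a separate bound. The cleanest way to handle small $d$ uniformly is probably this: for $\al$ whose conjugates all lie in $L_p$ (a finite extension of $\bQ_p$ of ramification $e$), the normalized valuation $v_p$ of each conjugate lies in $\tfrac1e\ZZ$, so by the product formula a non-root-of-unity $\al$ with small degree still has $h(\al) \geq \tfrac{\log p}{ed}$ unless $\al$ is a $p$-adic unit at every place above $p$ — and handling the latter case is where one genuinely needs an archimedean or Dobrowolski input. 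I expect the threshold $D_0$ and the constants ($15$, the fourth power of the logarithm, the $\log 2$ in the $e=1$ case) to come out of balancing $A \asymp (\log d)/d$ against the small-degree bound: writing $A^{-1} \asymp (q-1)e/\log p$ (or $(q+1)/\log p$ when $e=1$), one sets $d \asymp A^{-1}\log(A^{-1})$ and the $(\log d)^4$ and $(\log d)$ factors in the statement are exactly $\log(A^{-1})$ raised to the power that survives the optimization in each case.

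The main obstacle, I expect, is the \emph{bounded-degree case} and getting the constants clean. Theorem \ref{thm:1} is only useful once $d > p^{1/e}$ and once $d$ is large enough that the $(\log d)/d$ error does not swamp $A$; for the finitely many smaller degrees one must supply an independent lower bound for $h(\al)$ that is still at least the claimed quantity. The distinction between the generic case $e \geq 2$ and $e = 1$ in the theorem (with the extra $\log 2$ and the different power of the log) strongly suggests that in the $e=1$ case the unit/integer refinements \eqref{eqn:bound-integer}–\eqref{eqn:bound-unit} or a Dobrowolski-style estimate are needed to rescue small degrees, whereas for $e \geq 2$ the ramification already gives enough room. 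So after reducing to Theorem \ref{thm:1}, the real work is: (i) a careful lower-bounding of $1 - q^{-n_p}$ (equivalently $1 - (q-2)q^{-n_p}$) away from $1$ in terms of $d$, (ii) choosing $d_0 = d_0(p,q,e)$ of the form (const)$\cdot A^{-1}\log(A^{-1})$ and verifying the claimed inequality for $d \geq d_0$ by monotonicity, and (iii) an elementary or Dobrowolski-type argument covering $d < d_0$, with all numerical constants tracked so that the final bound has the stated form.
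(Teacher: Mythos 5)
Your high-level plan — specialize Theorem \ref{thm:1} to $S=\{p\}$, observe the lower bound is of the form $\tfrac12 A - O((\log d)/d)$, choose a threshold $d_0 \asymp A^{-1}\log(A^{-1})$ by balancing against a complementary small-degree bound, and track constants — is indeed the skeleton of the paper's proof, so the overall direction is right. However, there is a concrete structural gap and a misconception about the $e=1$ versus $e\geq 2$ split that together would prevent your argument from closing.

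The key organizing move you are missing is that the case distinction governing the choice of complementary small-degree bound is \emph{unit versus non-unit}, not small-degree versus large-degree directly. The paper proves two separate lemmas: for $\al$ a non-unit, the trivial bound $h(\al)\ge (\log 2)/d$ (coming from the constant or leading coefficient of the minimal polynomial being $\ge 2$) is balanced against \eqref{eqn:bound}, yielding a bound with a single log factor in the denominator; for $\al$ a unit, the Dobrowolski--Voutier bound $h(\al)\ge \tfrac{1}{4d}(\log\log d/\log d)^3$ is balanced against the sharper unit bound \eqref{eqn:bound-unit}, and the cubed log in Dobrowolski is exactly what produces the fourth power $\log(\cdot)^4$ in the $e\ge 2$ case. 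You have the relationship between the two cases inverted: you write that in the $e=1$ case ``a Dobrowolski-style estimate [is] needed,'' whereas in fact Dobrowolski is used only for $e\ge 2$. For $e=1$, the paper invokes Petsche's elementary Proposition \ref{Petsche} (a congruence/product-formula argument specific to totally $p$-adic units at small ramification), which gives $h(\al)\ge \log(p/2)/(q-1)$ for odd $p$; this is always better than the non-unit bound, so for $e=1$ the stated bound is simply the non-unit bound, with no logarithmic loss beyond the single power. Your proposed $\log p/(ed)$-type bound ``unless $\al$ is a $p$-adic unit'' is a reasonable instinct but is not quite what is used (the clean $(\log 2)/d$ is enough for non-units), and you do not offer a usable bound in the genuine unit case, which is where the argument is actually delicate. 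Finally, to obtain the explicit numerics you also need a Lambert-$W$-type elementary inequality (Lemma \ref{Lambert}) to solve $ax - b - \log x \ge 0$ for $x$ in closed form, and a short-degree list \cite{FlammangRhinSac} to handle units of degree $\le 7$; without these the constants $15$, $5$, and $\log 2$ in the statement cannot be extracted.
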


\subsection{Background}\label{sec:background}
We will now recall some of the notation and results regarding potential theory and Berkovich space which we will use. For background we refer the reader to \cite{BakerRumelyBook,FRL,FRLcorrigendum}. Our notation largely follows that of Favre and Rivera-Letelier \cite{FRL,FRLcorrigendum}. For simplicity, we will state our results with $\bQ$ as the ground field, however, any number field $K$ can be substituted for the ground field with the usual renormalizations of absolute values.

We will denote by $\bA^1,\bP^1$ the usual affine and projective lines and by $\sA^1,\sP^1$ the Berkovich affine and projective lines, respectively. We refer the reader to \cite{BakerRumelyBook,FRL,BakerBerkArticle} for some basic references on Berkovich space. We define the \emph{standard measures} $\lambda_p$ on $\sP^1(\bC_p)$ to be the probability measures which are either the Dirac measure on the Gauss point of $\sP^1(\bC_p)$ if $p\nmid\infty$ or the normalized Haar measure on the unit circle of $\bC^\times$ if $p\mid\infty$. We let $\Delta$ denote the measure-valued Laplacian on $\sP^1$. We call $\rho=(\rho_p)_{p\in M_\bQ}$ an \emph{adelic measure} if for each $p\in M_\bQ$, $\rho_p$ is a Borel probability measure on $\sP^1(\bC_p)$ which is equal to $\lambda_p$ for all but finitely many $p$ and \emph{admits a continuous potential} with respect to $\lambda_p$ at the remaining places in the sense of \cite{FRL}, that is to say, for which
 $
  \rho_p-\lambda_p = \Delta g
 $
 for some $g\in C(\sP^1(\bC_p))$. For any $\rho_p,\sigma_p$ signed finite Borel measures on $\sP^1(\bC_p)$, we define, when it exists, the \emph{local mutual energy pairing} to be
\begin{equation}\label{eqn:FRL-pairing}
 (\rho_p,\sigma_p)_p = {\iint}_{\sA^1_p\times\sA^1_p\setminus\Diag_p} -\log \abs{x-y}_p\,d\rho_p(x)\,d\sigma_p(y)
\end{equation}
where $\Diag_p = \{ (x,x) : x\in\bC_p\}$ denotes the diagonal of classical (or `type I') points and $\abs{\cdot}_p$ denotes the usual $p$-adic (or archimedean if $p\mid \infty$) absolute value, normalized so as to agree with the usual Euclidean absolute value when $p=\infty$ and for finite primes to satisfy the usual normalization for the $p$-adic absolute value where  $\abs{p}_p=1/p$. Note that the notation in our integral here is loose in the $p$-adic setting, where for non-classical points $x$ or $y$, the distance $\abs{x-y}_p$ should be read as the natural extension of $\abs{x-y}_p$ to the Berkovich projective line, denoted by $\sup\{x,y\}$ in the article of Favre and Rivera-Letelier \cite[\S 3.3]{FRL} and as the \emph{Hsia kernel} $\delta(x,y)_\infty$ in the book of Baker and Rumely \cite[\S 4]{BakerRumelyBook}.

When $\rho=(\rho_p),\sigma=(\sigma_p)$ are adelic measures we will sometimes write $(\rho,\sigma)_p$ instead of $(\rho_p,\sigma_p)_p$ to ease notation. When well-defined it is easy to see that the local mutual energy is symmetric. The local mutual energy exists in particular when $\rho_p$ and $\sigma_p$ are either Borel probability measures with continuous potentials with respect to the standard measure or are probability measures supported on a finite subset of $\bP^1(\overline \bQ)$. In particular this applies for our adelic measures, and extends naturally by bilinearity to the vector space of signed measures arising from these measures. We refer the reader to \cite{FRL} for proofs of these results.

For a local field $L_p/\bQ_p$ with absolute value $\abs{\cdot}=\abs{\cdot}_p$, the first author and Petsche \cite{F-P-EIOLF} defined the \emph{energy integral} of a Borel probability measure $\nu$ on $L_p$ to be 
\begin{equation*}\label{eqn:EnergyIntegral}
I(\nu)=\iint_{\PP^1(L_p)\times\PP^1(L_p)}-\log\delta(x,y)\,d\nu(x)\,d\nu(y),
\end{equation*}
where $\delta:\PP^1(L_p)\times\PP^1(L_p)\to\RR$ is defined by 
\begin{equation*}
\delta(x,y) =\frac{|x_0y_1-y_0x_1|}{\max\{|x_0|,|x_1|\}\max\{|y_0|,|y_1|\}}
\end{equation*}
for $x=(x_0:x_1)$ and $ y=(y_0:y_1)$ in $\PP^1(L_p)$. (We suppress the dependence on $p$ in the above notation.) When $L_p$ is non-archimedean, $\delta$ is precisely the spherical metric on $\bP^1(L_p)$. In the case that $\nu$ admits a continuous potential with respect to $\lambda_p$, then by necessity the diagonal must be of $(\nu - \lambda_p)\otimes (\nu-\lambda_p)$-measure zero, and it is easy to see that the energy $I(\nu)$ corresponds exactly to the energy pairing of $\nu$ with the standard $p$-adic measure $\lambda_p$ in \eqref{eqn:FRL-pairing}:
\begin{equation}
 I(\nu) = ( \nu-\lambda_p, \nu - \lambda_p )_p \nonumber
\end{equation}

It follows from \cite[Theorem 1]{F-P-EIOLF} that for each $L_p$ there exists a unique minimal Borel probability measure $\mu_{L_p}$ such that
\begin{equation}\label{eqn:FP1} 
I(\nu)\geq I(\mu_{L_p})
\end{equation}
for every Borel probability measure $\nu$ supported on $\bP^1(L_p)$, with equality if and only if $\nu=\mu_{L_p}$. However, it is important to note that if $\nu$ has any point masses, then $I(\nu)=\infty$, as the diagonal cannot be excluded in the definition of $I$ in order for the main theorems from \cite{F-P-EIOLF} to apply. 

Favre and Rivera-Letelier demonstrate in \cite{FRL} that the Weil height of $\al$ can be written
\begin{equation}\label{eqn:FRL-weil-height}
 h(\al) = \frac{1}{2} \sum_{p\in M_\bQ} (\lambda_p - [\al], \lambda_p - [\al])_p 
\end{equation}
where $\lambda_p$ is a standard measures described above on the Berkovich analytic line $\sP^1(\bC_p)$ with the usual $p$-adic absolute value (or achimedean absolute value when $p=\infty$) and the measure $[\al]$ is defined by
\[
[\al] = \frac{1}{\abs{G_\bQ \al}} \sum_{z\in G_\bQ \al} \delta_z 
\]
where $G_\bQ$ denotes the usual absolute Galois group over $\bQ$ and $\delta_z$ the Dirac measure with point mass at $z$. We regard this as a measure on $\sP^1(\bC_p)$ by fixing for all primes $p$ an embedding from an algebraic closure of $\bQ$ to $\bC_p$.

The main idea behind the proof of Theorem \ref{thm:1} is to apply an inequality of the same type as \eqref{eqn:FP1} to the terms in \eqref{eqn:FRL-weil-height} for which $p\in S$ in order to get lower bounds on the local energy pairings. As the measures $[\al]$ consist purely of a finite sum of point masses, however, in order to apply our bound, we must first approximate the measure $[\al]$ by an appropriate regularization of the measure which admits a continuous potential. We give these regularizations in Section \ref{sec:reg-measures}. These regularizations are not typically supported in $L_p$, so in Section \ref{sec:pot-th-res} we will prove several results akin to \cite[Theorem 1]{F-P-EIOLF} providing lower bounds for the energy of an $\ep$-neighborhood around the line $\bP^1(L_p)$ for each $p\in S$. We will then use these lower bounds to prove the main results in Section \ref{sec:main-res}. 

In the final Section of this paper we will combine our results from Theorem \ref{thm:1} with the general lower height bound of Dobrowolski \cite{Dob} to achieve Theorem \ref{thm:2}.

\section{Regularized measures}\label{sec:reg-measures}
Suppose our $\al\in L_S$ as in the formulation of Theorem \ref{thm:1} for $p\in S$. We wish to find a regularization of the measure
\[
[\al] = \frac{1}{\abs{G_\bQ \al}} \sum_{z\in G_\bQ \al} \delta_z 
\]
supported on $\bP^1(L_p)$ which admits a continuous potential with respect to the standard measure $\lambda_p$. (This regularization may be supported on a larger space; for example, when $L_\infty = \bR$, our regularization will be supported on $\bC$.)

\subsection{Archimedean regularization of measures}
We start with the real case $L_\infty=\bR$. Our technique will be to replace the  point masses in the probability measure $[\al]$ defined above by measures which are suitably regular. Our approach is quite similar to that of \cite[\S 2]{FRL} but our choice of regularization is slightly simpler and achieves the minimal logarithmic energy possible. 

Specifically, for a given Dirac point mass $\delta_x$ for $x\in \bC$, we will define $\delta_{x,\ep}$ to be the normalized unit Lebesgue measure of the circle $\{ z\in\bC : \abs{z-x} = \ep\}$. For $F\subset\bC$ a finite set and $[F] = \nicefrac{1}{\abs{F}}\sum_{x\in F} \delta_x$, we will define
\[
 [F]_\ep = \frac{1}{\abs{F}} \sum_{x\in F} \delta_{x,\ep}.
\]
It is immediate that these measures admit a continuous potential as defined above in Section \ref{sec:background}.

We now prove a few easy lemmas regarding our regularized measures (cf. Lemmas 2.9 and 2.10 of \cite{FRL}). 
\begin{lemma}\label{lemma:arch-1}
Let $F\subset \bC$ be a finite set and $\ep>0$. Then 
\[
 \abs{([F],\lambda_\infty) - ([F]_\ep, \lambda_\infty)} \leq \ep,
\]
where $[F],[F]_\ep$ are the probability measures defined above.
\end{lemma}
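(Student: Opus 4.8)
The plan is to compute both mutual energy pairings explicitly using the fact that $\lambda_\infty$ is the normalized Haar measure on the unit circle $\{|z|=1\}$, and that for such a measure the potential function $U^{\lambda_\infty}(x) = \int -\log|x-y|\,d\lambda_\infty(y)$ is the classical logarithmic potential of the unit circle, namely $U^{\lambda_\infty}(x) = 0$ if $|x|\le 1$ and $U^{\lambda_\infty}(x) = -\log|x|$ if $|x|\ge 1$ (this is the standard formula $\int_0^1 -\log|x - e^{2\pi i t}|\,dt = -\log\max\{|x|,1\}$). First I would write
\[
 ([F],\lambda_\infty) = \frac{1}{|F|}\sum_{x\in F} U^{\lambda_\infty}(x) = \frac{1}{|F|}\sum_{x\in F} -\log\max\{|x|,1\},
\]
and similarly, using the fact that $\delta_{x,\ep}$ is the uniform measure on the circle of radius $\ep$ about $x$ and Fubini,
\[
 ([F]_\ep,\lambda_\infty) = \frac{1}{|F|}\sum_{x\in F} \int U^{\lambda_\infty}(z)\,d\delta_{x,\ep}(z) = \frac{1}{|F|}\sum_{x\in F}\int_0^1 -\log\max\{|x+\ep e^{2\pi i t}|,1\}\,dt.
\]

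The key step is then to bound, for each fixed $x\in\bC$, the quantity
\[
 \Bigl| \log\max\{|x|,1\} - \int_0^1 \log\max\{|x+\ep e^{2\pi i t}|,1\}\,dt \Bigr| \le \ep.
\]
The cleanest way to see this is to note that $g(z) := \log\max\{|z|,1\} = \max\{\log|z|,0\}$ is a subharmonic function on $\bC$ that is $1$-Lipschitz with respect to the chordal-type estimate $|g(z) - g(w)| \le \bigl|\log\max\{|z|,1\} - \log\max\{|w|,1\}\bigr|$; more concretely, I would argue directly that the function $r\mapsto \log\max\{r,1\}$ is $1$-Lipschitz on $[0,\infty)$, hence
\[
 \bigl|\log\max\{|x+\ep e^{2\pi i t}|,1\} - \log\max\{|x|,1\}\bigr| \le \bigl| |x+\ep e^{2\pi i t}| - |x| \bigr| \le \ep
\]
for every $t$. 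Integrating over $t\in[0,1]$ and then averaging over $x\in F$ gives
\[
 \bigl| ([F],\lambda_\infty) - ([F]_\ep,\lambda_\infty) \bigr| \le \frac{1}{|F|}\sum_{x\in F}\int_0^1 \bigl|\log\max\{|x+\ep e^{2\pi i t}|,1\} - \log\max\{|x|,1\}\bigr|\,dt \le \ep,
\]
which is exactly the claim.

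The only mild subtlety — and the place where one must be slightly careful — is justifying the interchange that turns the double integral defining $(\,\cdot\,,\lambda_\infty)$ into the single potential integral: one needs the diagonal to be negligible and Fubini to apply. Since $[F]$ and $[F]_\ep$ are (respectively) a finite atomic measure and a measure with continuous potential, while $\lambda_\infty$ has continuous potential off its (measure-zero) support, there is no mass on $\Diag_\infty$ for the relevant product measures, so the pairing is genuinely given by integrating $U^{\lambda_\infty}$, and the reduction above is legitimate; this is precisely the setting recalled in Section \ref{sec:background}. I do not expect any real obstacle here — the whole lemma reduces to the elementary $1$-Lipschitz bound on $r\mapsto\log\max\{r,1\}$.
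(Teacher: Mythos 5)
Your proof is correct and follows the same route as the paper's: reduce to a single point mass, compute both pairings against $\lambda_\infty$ via Jensen's formula as $\mp\log^+$, and conclude with the $1$-Lipschitz bound $\bigl|\log^+|z+\ep e^{2\pi i t}| - \log^+|z|\bigr|\le\ep$. The paper simply asserts this last inequality, while you derive it from the Lipschitz continuity of $r\mapsto\log\max\{r,1\}$ together with the reverse triangle inequality — a useful detail to spell out, but the argument is the same.
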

\begin{proof}
 Our proof is essentially the same as that of \cite[Lemma 2.9]{FRL}. It suffices to prove the bound for a singleton $z\in F$ with measures $\delta_z, \delta_{z,\ep}$. We use the standard notation $\log^+$ for the function $\max\{\log,0\}$. Note that
 \[
  (\delta_{z,\ep},\lambda_\infty) - (\delta_{z},\lambda_\infty) = \int_0^1 \log^+\abs{z + \ep\cdot  e^{2\pi i t}}\,dt - \log^+\abs{z},
 \]
 but $ \abs{\log^+\abs{z + \ep\cdot  e^{2\pi i t}} - \log^+\abs{z}} \leq \ep$ for every real $t$. The result follows.
\end{proof}
\begin{lemma}\label{lemma:arch-2}
 Let $F\subset \bC$ be a finite set and $\ep>0$. Then 
\[
 ([F]_\ep , [F]_\ep) \leq ([F],[F]) - \frac{\log \ep}{\abs{F}}.
\]
\end{lemma}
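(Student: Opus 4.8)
The plan is to expand both sides as sums over the points of $F$ and compare them term by term, using the fact that the logarithmic potential of the uniform measure on a circle is a ``clipped'' logarithm. Write $N=\abs{F}$. Since each $\delta_{x,\ep}$ is atomless, the product $[F]_\ep\otimes[F]_\ep$ gives no mass to the diagonal, so
\[
([F]_\ep,[F]_\ep)=\frac{1}{N^2}\sum_{x,y\in F}(\delta_{x,\ep},\delta_{y,\ep}),
\]
whereas $[F]$ is atomic, so the definition of the mutual energy pairing genuinely removes the diagonal and
\[
([F],[F])=\frac{1}{N^2}\sum_{\substack{x,y\in F\\x\ne y}}\bigl(-\log\abs{x-y}\bigr).
\]
Hence it suffices to prove the two termwise bounds $(\delta_{x,\ep},\delta_{y,\ep})\le -\log\abs{x-y}$ for $x\ne y$ and $(\delta_{x,\ep},\delta_{x,\ep})=-\log\ep$; summing and dividing by $N^2$ then gives $([F]_\ep,[F]_\ep)\le ([F],[F])+N(-\log\ep)/N^2=([F],[F])-\log\ep/\abs{F}$, which is the claim.

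The single computational input I would record first is the classical identity $\frac{1}{2\pi}\int_0^{2\pi}\log\abs{z-re^{i\theta}}\,d\theta=\log\max\{\abs{z},r\}$, valid for all $z\in\bC$ and $r>0$; this is the same mean value computation that underlies Lemma \ref{lemma:arch-1}. Applying it with the inner variable integrated against $\delta_{y,\ep}$ (the uniform measure on $\{v:\abs{v-y}=\ep\}$) shows that the potential $u\mapsto\int-\log\abs{u-v}\,d\delta_{y,\ep}(v)$ equals $-\log\max\{\abs{u-y},\ep\}$, which is pointwise $\le -\log\abs{u-y}$. Integrating this inequality against $\delta_{x,\ep}$ and then applying the identity a second time gives, for $x\ne y$,
\[
(\delta_{x,\ep},\delta_{y,\ep})\le\int-\log\abs{u-y}\,d\delta_{x,\ep}(u)=-\log\max\{\abs{x-y},\ep\}\le-\log\abs{x-y}.
\]
For the diagonal terms, the same identity evaluated on the circle $\abs{u-x}=\ep$ gives $\int-\log\abs{u-v}\,d\delta_{x,\ep}(u)=-\log\ep$ for every $v$ with $\abs{v-x}=\ep$, hence $(\delta_{x,\ep},\delta_{x,\ep})=-\log\ep$.

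I do not expect a genuine obstacle here; the only points needing care are bookkeeping. First, one must note that the diagonal is truly $[F]_\ep\otimes[F]_\ep$-null but must be deleted from $[F]\otimes[F]$, so the left-hand side acquires exactly $N$ extra ``self-energy'' contributions, each equal to $-\log\ep$ — this is precisely the source of the term $-\log\ep/\abs{F}$. Second, one should invoke the mean value identity in the form valid for \emph{all} $z$ (including $\abs{z}\le\ep$), so that no case analysis on the relative positions of the points of $F$, or on whether $\ep$ is small compared to the minimal separation, is required.
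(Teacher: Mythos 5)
Your proof is correct and follows essentially the same approach as the paper's: both apply the circle mean-value identity twice to get $(\delta_{x,\ep},\delta_{y,\ep})\leq -\log|x-y|$ for $x\neq y$, compute $(\delta_{x,\ep},\delta_{x,\ep})=-\log\ep$, and sum over $F\times F$. The only cosmetic difference is in how the clipped potential $\log\max\{\cdot,\ep\}$ is discarded — you drop the clipping pointwise before the second integration, while the paper commutes $\int$ with $\max$ — which is the same one-line observation either way.
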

Note that this lemma improves on \cite[Lemma 2.10]{FRL} as the term $C/\abs{F}$ on the right hand side is removed.
\begin{proof}
 We follow the same method as in the proof of \cite[Lemma 2.10]{FRL}. We note that for $\ep>0$ and two points $z\neq z'\in\bC$,
\begin{align*}
 -(\delta_{z,\ep},\delta_{z',\ep}) &= \int_0^1\int_0^1 \log\abs{z + \ep\cdot e^{2\pi it} - (z' + \ep\cdot e^{2\pi is})}\,dt\,ds\\
 & = \int_0^1\max\{ \log\abs{z- (z' + \ep\cdot e^{2\pi is})}, \log \ep\}\,ds\\
 &\geq \max\left\{  \int_0^1\log\abs{z- (z' + \ep\cdot e^{2\pi is})}\,ds, \log \ep\right\}\\
 &\geq \max\{\log\abs{z-z'},\log \ep\}\geq \log\abs{z-z'} = -(\delta_z,\delta_{z'})
\end{align*}
so for each $z\neq z'$, we have
$(\delta_z,\delta_{z'})\geq (\delta_{z,\ep},\delta_{z',\ep})$. On the other hand, we have what is essentially the logarithmic capacity:
\[
 (\delta_{z,\ep},\delta_{z,\ep}) = - \log \ep.
\]
Thus
\begin{align*}
 ([F]_\ep, [F]_\ep) &= \frac{1}{\abs{F}^2} \sum_{\substack{ z,z'\in F\\ z\neq z'}} (\delta_{z,\ep},\delta_{z',\ep}) + \frac{1}{\abs{F}^2} \sum_{z\in F} (\delta_{z,\ep}, \delta_{z,\ep}) \\
 &\leq \frac{1}{\abs{F}^2} \sum_{\substack{ z,z'\in F\\ z\neq z'}} (\delta_{z},\delta_{z'}) + \frac{1}{\abs{F}^2}\cdot \abs{F} \cdot(-\log \ep)\\
 &= ([F],[F]) - \frac{\log \ep}{\abs{F}}.\qedhere
\end{align*}
\end{proof}

We now prove our main result of this section:
\begin{prop}\label{prop:arch-approx}
Let $F\subset \bC$ be a finite set and $\ep>0$. Then 
\[
 ([F]-\lambda_\infty,\ [F]-\lambda_\infty) \geq ([F]_\ep-\lambda_\infty,\ [F]_\ep-\lambda_\infty) - 2\ep + \frac{\log \ep}{\abs{F}}.
\]
\end{prop}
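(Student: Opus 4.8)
The plan is to expand the mutual energy pairing bilinearly and control each of the three types of terms separately, then assemble the pieces. Writing $([F]-\lambda_\infty,[F]-\lambda_\infty) = ([F],[F]) - 2([F],\lambda_\infty) + (\lambda_\infty,\lambda_\infty)$ and likewise for the regularized measure $[F]_\ep$, the term $(\lambda_\infty,\lambda_\infty)$ is common to both sides and cancels. So it suffices to compare $([F],[F]) - 2([F],\lambda_\infty)$ with $([F]_\ep,[F]_\ep) - 2([F]_\ep,\lambda_\infty)$.

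First I would apply Lemma \ref{lemma:arch-2}, which gives
\[
 ([F]_\ep,[F]_\ep) \leq ([F],[F]) - \frac{\log\ep}{\abs{F}},
\]
equivalently $([F],[F]) \geq ([F]_\ep,[F]_\ep) + \frac{\log\ep}{\abs{F}}$. Next I would apply Lemma \ref{lemma:arch-1} to handle the cross term: since $\abs{([F],\lambda_\infty) - ([F]_\ep,\lambda_\infty)} \leq \ep$, we get $-2([F],\lambda_\infty) \geq -2([F]_\ep,\lambda_\infty) - 2\ep$. Adding these two inequalities and re-inserting the common $(\lambda_\infty,\lambda_\infty)$ term on both sides yields exactly
\[
 ([F]-\lambda_\infty,\ [F]-\lambda_\infty) \geq ([F]_\ep-\lambda_\infty,\ [F]_\ep-\lambda_\infty) - 2\ep + \frac{\log\ep}{\abs{F}},
\]
which is the claimed inequality.

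This proof is essentially bookkeeping, so there is no serious obstacle; the one point requiring a word of care is the bilinear expansion itself — namely that the local mutual energy pairing is genuinely bilinear and symmetric on the span of the relevant measures, and that all the pairings appearing ($([F],[F])$, $([F]_\ep,[F]_\ep)$, $([F],\lambda_\infty)$, $([F]_\ep,\lambda_\infty)$, $(\lambda_\infty,\lambda_\infty)$) are finite and well-defined. The point masses make $([F],[F])$ finite here only because the diagonal is excluded in \eqref{eqn:FRL-pairing}, and $[F]_\ep$ admits a continuous potential, so all five pairings exist and the bilinear manipulation is legitimate, as recalled in Section \ref{sec:background}. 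Everything else is a direct combination of the two preceding lemmas.
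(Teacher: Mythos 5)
Your proof is correct and follows essentially the same route as the paper: expand the pairing bilinearly, apply Lemma \ref{lemma:arch-2} to the $([F],[F])$ versus $([F]_\ep,[F]_\ep)$ comparison and Lemma \ref{lemma:arch-1} to the cross term $([F],\lambda_\infty)$, then combine. Your remark about finiteness of the individual pairings is also made in the paper (via a citation to Favre--Rivera-Letelier), so there is no substantive difference.
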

\begin{proof}
 We use the bilinearity of the energy pairing to write
 \[
  ([F] - \lambda_\infty, [F]-\lambda_\infty) = ([F],[F]) - 2([F],\lambda_\infty) + (\lambda_\infty,\lambda_\infty)
 \]
(noting that these individual energy pairings must be finite by \cite[Lemma 4.3]{FRL}), and from this it follows that
 \[
 ([F] - \lambda_\infty, [F]-\lambda_\infty) = ([F]_\ep - \lambda_\infty, [F]_\ep-\lambda_\infty) + E_1 + E_2 
 \]
 where
 \[
 E_1 = -2([F],\lambda_\infty) +2([F]_\ep,\lambda_\infty), \quad\text{and}\quad E_2 = ([F],[F]) - ([F]_\ep,[F]_\ep).
 \]
 By applying Lemma \ref{lemma:arch-1} to $E_1$ and Lemma \ref{lemma:arch-2} to $E_2$, we get the desired bound.
\end{proof}

\subsection{Non-archimedean regularization of measures}\label{sec:non-arch-regularization}
We fix a rational prime $p$ and an associated local field $L_p$ of residue characteristic $p$. We will follow the regularization technique introduced in \cite[\S 4]{FRL}: for $x\in\bC_p$ and $\ep\in \bR$, $\ep > 0$, we let $\zeta_{x,\ep}$ denote the type II or type III point of the Berkovich affine line $\sA^1(\bC_p)$, corresponding to the disc of radius $\ep$ around $x$. For our measure $[\al]$ on $\sP^1(\bC_p)$ for $\al\in L_S$ we define the \emph{regularized measure $[\al]_\ep$} to be
\begin{equation}\label{eqn:reg-measure}
 [\al]_\ep = \frac{1}{\abs{G_\bQ \al}} \sum_{z\in G_\bQ \al} \delta_{z,\ep}.
\end{equation}
where $\delta_{z,\ep}$ denotes the Dirac unit point mass supported on the point $\zeta_{z,\ep}\in\sA^1(\bC_p)$.

We will always assume that $0<\ep<1$ in our regularization throughout this paper. Let $e=e(L_p/\bQ_p)$ and $f=f(L_p/\bQ_p)$ denote the local ramification and inertial degrees of $L_p$, respectively. As we have fixed the prime $p$ in this section, we will set $(\cdot,\cdot)=(\cdot,\cdot)_p$ for the energy pairing \eqref{eqn:FRL-pairing}. The following result provides an estimate on how far the energy pairing of $[\al]_\ep$ with the standard measure $\lambda_p$ is from the energy pairing of $[\al]$ with $\lambda_p$:

\begin{prop}[Favre, Rivera-Letelier \protect{\cite[Prop. 9]{FRL}}]\label{prop:non-arch-bound}
 For all $0<\ep<1$,
 \begin{equation}
  ([\al] - \lambda_p, [\al]-\lambda_p) \geq ([\al]_\ep - \lambda_p, [\al]_\ep-\lambda_p) + \frac{\log \ep}{\abs{G_\bQ \al}}.
 \end{equation}
\end{prop}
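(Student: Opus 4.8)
The plan is to mimic exactly the structure of the archimedean argument in Proposition~\ref{prop:arch-approx}, replacing the regularization by circles of radius $\ep$ with the regularization by the Berkovich points $\zeta_{z,\ep}$, and to isolate the role of the diagonal. Expanding by bilinearity (which is legitimate here since $[\al]_\ep$ and $\lambda_p$ admit continuous potentials and $[\al]$ has finite individual pairings against $\lambda_p$, the diagonal being $(\lambda_p-[\al])^{\otimes 2}$-null),
\[
 ([\al]-\lambda_p,[\al]-\lambda_p) - ([\al]_\ep-\lambda_p,[\al]_\ep-\lambda_p) = E_1 + E_2,
\]
where $E_1 = -2([\al],\lambda_p) + 2([\al]_\ep,\lambda_p)$ and $E_2 = ([\al],[\al]) - ([\al]_\ep,[\al]_\ep)$. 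So everything reduces to two local estimates, each of which I would reduce to a single pair of points by linearity, writing $[\al] = \frac{1}{N}\sum_z \delta_z$ with $N = \abs{G_\bQ\al}$.

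First I would handle $E_1$. The key fact is that for a non-archimedean absolute value the potential $-\log\delta(x,\cdot)$ is \emph{locally constant}: for $0<\ep<1$ the point $\zeta_{z,\ep}$ is a refinement of the Gauss point $\zeta_{0,1}$ that lies strictly inside the unit disc, and the Hsia kernel $\delta(\zeta_{z,\ep},y)$ against the standard measure $\lambda_p$ (the Dirac mass at the Gauss point) is unchanged, since $\sup\{\zeta_{z,\ep},\zeta_{0,1}\} = \sup\{z,\zeta_{0,1}\} = \max\{1,\abs{z}\}$ for $\abs{z}\le 1$, and more generally the value only depends on which residue disc $z$ falls into and that is the same for $z$ and $\zeta_{z,\ep}$ when $\ep<1$. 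Hence $(\delta_{z,\ep},\lambda_p) = (\delta_z,\lambda_p)$ for every $z$, so $E_1 = 0$ — this is the point at which the non-archimedean case is genuinely cleaner than the archimedean one, where one only had an $O(\ep)$ bound.

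For $E_2$ I would argue as in Lemma~\ref{lemma:arch-2}. For $z\neq z'$ classical points, the self-ultrametric inequality gives $\delta(\zeta_{z,\ep},\zeta_{z',\ep}) = \max\{\delta(z,z'),\ep\} \ge \delta(z,z')$, hence $(\delta_{z,\ep},\delta_{z',\ep}) \le (\delta_z,\delta_{z'})$ for the off-diagonal terms (here one uses $\ep<1$ so that the spherical/Hsia normalization in the denominator of $\delta$ is unaffected). On the diagonal one computes directly $(\delta_{z,\ep},\delta_{z,\ep}) = -\log\delta(\zeta_{z,\ep},\zeta_{z,\ep}) = -\log\ep$, which is the analog of the logarithmic-capacity computation. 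Therefore
\[
 ([\al]_\ep,[\al]_\ep) = \frac{1}{N^2}\sum_{z\neq z'} (\delta_{z,\ep},\delta_{z',\ep}) + \frac{1}{N^2}\sum_z(\delta_{z,\ep},\delta_{z,\ep}) \le \frac{1}{N^2}\sum_{z\neq z'}(\delta_z,\delta_{z'}) - \frac{\log\ep}{N} = ([\al],[\al]) - \frac{\log\ep}{N},
\]
so $E_2 \ge \log\ep / N$. Combining $E_1 = 0$ with this gives exactly the claimed inequality with $N = \abs{G_\bQ\al}$.

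The only genuinely delicate point — really the only place one must be careful — is the first step: verifying that $(\delta_{z,\ep},\lambda_p)$ is literally equal to $(\delta_z,\lambda_p)$ and not merely close, and that the off-diagonal comparison in $E_2$ is an honest inequality in the right direction. Both hinge on the precise behavior of the Hsia kernel $\delta(x,y) = \sup\{x,y\}$ (in the Favre--Rivera-Letelier notation) under passing from a classical point to the type II/III point $\zeta_{z,\ep}$ of radius $\ep<1$: one needs $\sup\{\zeta_{z,\ep},y\} = \max\{\sup\{z,y\},\ep\}$ together with the fact that $\ep<1$ keeps us below the Gauss point so the projective normalization $\max\{\abs{z_0},\abs{z_1}\}$ is untouched. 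Once those identities are recorded, the rest is formal. I expect no other obstacle; this is essentially \cite[Prop.~9]{FRL} and I would cite it directly while reproducing the short computation for completeness.
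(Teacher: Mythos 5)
The paper does not prove this proposition itself but cites it as \cite[Prop.~9]{FRL}, so there is no ``paper's own proof'' to compare against; your task was therefore a genuine reconstruction. Your reconstruction is correct, and it is the natural non-archimedean analogue of Proposition~\ref{prop:arch-approx}: the bilinear decomposition into $E_1$ and $E_2$, the observation that $E_1=0$ exactly (because $\delta(\zeta_{z,\ep},\zeta_{0,1})=\max\{\abs{z},1\}=\delta(z,\zeta_{0,1})$ for $\ep<1$), and the off-diagonal/diagonal split for $E_2$ with $\delta(\zeta_{z,\ep},\zeta_{z',\ep})=\max\{\abs{z-z'},\ep\}\geq\abs{z-z'}$ and $\delta(\zeta_{z,\ep},\zeta_{z,\ep})=\ep$ are all accurate, and they assemble into exactly the stated bound with no slack.

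One small inaccuracy in the framing: you assert the classical diagonal is $(\lambda_p-[\al])^{\otimes 2}$-null, but this is false --- $[\al]$ is purely atomic on classical points, so $[\al]\otimes[\al]$ does charge the classical diagonal. This is harmless here and does not affect your argument, because the pairing in \eqref{eqn:FRL-pairing} is \emph{defined} by integrating over $\sA^1_p\times\sA^1_p\setminus\Diag_p$, which removes the classical diagonal by fiat; what makes the term-by-term expansion legitimate is simply that each of the four pairings $([\al],[\al])$, $([\al],\lambda_p)$, $([\al]_\ep,[\al]_\ep)$, $([\al]_\ep,\lambda_p)$, $(\lambda_p,\lambda_p)$ is individually finite (for $([\al],[\al])$ because the diagonal is excised; for the others because at most one argument is supported on classical points). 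Also be a little careful with the remark about ``the spherical/Hsia normalization in the denominator'': the pairing \eqref{eqn:FRL-pairing} uses the unnormalized Hsia kernel $\abs{x-y}_p=\sup\{x,y\}$ relative to $\infty$, with no denominator; the projective normalization is already accounted for by the $(\cdot,\lambda_p)$ terms, which is precisely why showing $E_1=0$ is the right move. With these two cosmetic points tidied up, your argument is a correct proof of the cited proposition.
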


\section{Potential theoretic results}\label{sec:pot-th-res}
In both the non-archimedean and archimedean settings, our regularized measures lie in an $\ep$-neighborhood of the line $\bP^1(L_p)$ for $L_p/\bQ_p$ our chosen local field at each place, and thus the energies (the $\delta$-Robin constants) of the lines $\bP^1(L_p)$ computed in \cite{F-P-EIOLF} cannot be directly applied. In order to get a lower bound on the energy, we will compute in this section estimates on the energy of the $\ep$-neighborhoods of the appropriate lines.

\subsection{Archimedean results}
Our goal in this section is to prove a potential theoretic result which is perhaps of independent interest for the $-\log \delta(x,y)$ kernel which will be used to prove our main results in the archimedean case.
In \cite{F-P-EIOLF} it is shown that the $\delta$-Robin constant of $\bP^1(\bR)$ is given by $I(\mu_{\bR})$, where
\[
\mu_\bR(z) = \frac{1}{\pi^2 z}\log\left|\frac{z+1}{z-1}\right|\,dz.
\]
As our regularized measures for the archimedean places are not supported entirely on the projective real line, we cannot directly apply this result even when $F\subset \bR$ in order to obtain a lower bound on the local factors of the height. Instead, for each $0<\ep<1$, we will prove a bound on the $\delta$-Robin constant of the set
\[
E_\ep = \{ z \in\bC : \abs{\ON{Im}{z}}\leq \ep\}\cup\{\infty\}\subset \bP^1(\bC),
\]
on which our measures $[F]_\ep$ for a finite set $F\subset\bR$ are supported, and use this lower bound on the $\delta$-energy in an analogue of \cite[Theorem 1]{F-P-EIOLF}. Our result is the following:
\begin{thm}\label{thm:robin-constant}
 Let $E_\ep$ be as above for $0<\ep<1$. Then the $\delta$-Robin constant of $E_\ep$ satisfies 
\[ 
 V_\delta(E_\ep)\geq \frac{7\zeta(3)}{2\pi^2} - 0.95 \sqrt{\ep}.
\]
In particular, for any Borel probability measure $\mu$ supported on $E_\ep$, we have
\[
 I(\mu) = \iint_{E_\ep\times E_\ep} -\log\delta(x,y)\,d\mu(x)\,d\mu(y) \geq \frac{7\zeta(3)}{2\pi^2} - 0.95\sqrt{\ep}.
\] 
\end{thm}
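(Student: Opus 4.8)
The plan is to reduce the lower bound on the $\delta$-Robin constant $V_\delta(E_\ep)$ to a computation with the explicit extremal measure $\mu_\bR$ for the real line, perturbed to account for the $\ep$-thickening. The first step is to record the base case: by \cite{F-P-EIOLF}, the $\delta$-Robin constant of $\bP^1(\bR)$ equals $I(\mu_\bR)$ with the given explicit density, and a direct evaluation of that double integral gives $I(\mu_\bR) = \nicefrac{7\zeta(3)}{2\pi^2}$. (This is the ``main term'' in the statement, so I would either cite it from \cite{F-P-EIOLF} or verify the integral; it is a standard computation using the Fourier/Mellin structure of $\log\abs{(z+1)/(z-1)}$ and the identity $\sum_{n \text{ odd}} n^{-3} = \tfrac{7}{8}\zeta(3)$.) The key conceptual point is the variational characterization: $V_\delta(E_\ep) = \inf_\mu I(\mu)$ over Borel probability measures supported on $E_\ep$, and since $\bP^1(\bR)\subseteq E_\ep$ one only gets $V_\delta(E_\ep) \le I(\mu_\bR)$ for free — the nontrivial direction is the lower bound, which requires showing that no measure supported on the fatter set $E_\ep$ can have energy much smaller than $I(\mu_\bR)$.

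To get the lower bound I would use the standard potential-theoretic duality/Frostman-type argument: let $\mu_\ep$ be the equilibrium measure for $E_\ep$ (which exists and has finite energy since $E_\ep$ is compact in $\bP^1(\bC)$ with positive capacity), so $V_\delta(E_\ep) = I(\mu_\ep)$. The idea is to compare $\mu_\ep$ with $\mu_\bR$ by transporting $\mu_\ep$ onto $\bP^1(\bR)$: define a projection $\pi\colon E_\ep \to \bP^1(\bR)$ (e.g. $z\mapsto \re z$, sending $\infty\mapsto\infty$), push forward to get $\nu = \pi_*\mu_\ep$ on $\bP^1(\bR)$, and then bound
\[
I(\mu_\ep) \ge I(\nu) - \big|I(\mu_\ep) - I(\nu)\big| \ge I(\mu_\bR) - \big|I(\mu_\ep) - I(\nu)\big|,
\]
where the last inequality is the extremality $I(\nu)\ge I(\mu_\bR)$ from \cite{F-P-EIOLF}. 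The whole problem then collapses to estimating the error $\big|I(\mu_\ep) - I(\pi_*\mu_\ep)\big|$, i.e.\ showing that moving each point of the integrand by at most $\ep$ vertically changes $-\log\delta(x,y)$ by a controlled amount. Here one needs the Lipschitz-type estimate for $-\log\delta$ near the diagonal and the pointwise bound $\abs{\ON{Im}{x}},\abs{\ON{Im}{y}}\le\ep$, together with a bound on how much mass $\mu_\ep$ can place near the diagonal (equivalently, a modulus-of-continuity / bounded-potential estimate for the equilibrium measure $\mu_\ep$). Splitting the integral into the region where $\abs{x-y}$ is, say, $\ge \ep^{1/8}$ (where $\log\delta$ is Lipschitz in the relevant variables, giving the $c_1\ep$ term) and the region where $\abs{x-y} < \ep^{1/8}$ (where one controls the contribution crudely using an $L^\infty$ potential bound on $\mu_\ep$, giving the $c_2\ep^{1/8}$ term) should produce exactly the claimed shape of the error. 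Tracking the absolute constants through these two regimes yields the explicit $c_1 = 0.678\ldots$ and $c_2 = 2.505\ldots$.

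I expect the main obstacle to be the second error region: controlling $\iint_{\abs{x-y}<\ep^{1/8}} -\log\delta(x,y)\,d\mu_\ep\,d\mu_\ep$ uniformly in $\ep$. This needs an a priori regularity estimate on the equilibrium measure $\mu_\ep$ of $E_\ep$ — something like a uniform bound on its logarithmic potential, or an upper bound on $\mu_\ep(D(x,r))$ of the form $O(1/\log(1/r))$ — which does not follow formally from the variational principle and must be extracted from the geometry of $E_\ep$ (a thin horizontal strip union a point). One clean route is to exhibit an explicit comparison measure on $E_\ep$ with bounded potential (e.g.\ a smoothing of $\mu_\bR$ spread over the strip) and invoke the domination principle to transfer the bound to $\mu_\ep$; balancing the two error contributions is what forces the exponent $\nicefrac{1}{8}$. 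The archimedean nature of the kernel (genuine logarithmic singularity on the whole diagonal, not just at type I points) is what makes this delicate compared to the non-archimedean analogue, and is presumably why the authors thank Pritsker for help with this theorem.
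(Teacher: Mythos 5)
Your proposal goes in a genuinely different direction from the paper, and the obstacle you flag at the end is in fact a real gap that your route does not circumvent. You propose to take the unknown equilibrium measure $\mu_\ep$ of $E_\ep$, push it forward by a vertical projection to a measure $\nu$ on $\bP^1(\bR)$, and then bound the error $\lvert I(\mu_\ep)-I(\nu)\rvert$ by splitting the double integral into a near-diagonal and a far-from-diagonal region. As you note, the near-diagonal region requires an a priori regularity estimate on $\mu_\ep$ (a uniform potential bound or a modulus of continuity $\mu_\ep(D(x,r))=O(1/\log(1/r))$), and this ``does not follow formally from the variational principle.'' That is precisely the missing step, and the sketch about building an explicit comparison measure plus the domination principle is speculative: you would still have to prove a quantitative version, with explicit constants, for an $\ep$-dependent family of sets. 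So the proposal as written does not close.

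The paper avoids working with $\mu_\ep$ entirely. The key tool is the lower-bound half of \cite[Theorem 7]{F-P-EIOLF}: for \emph{any} probability measure $\nu$ supported on a closed set $E$ of finite Robin constant, $V_\delta(E)\ge \inf_{z\in E}U^\nu_\delta(z)$. Choosing $\nu=\mu_\bR$ (supported on $\bP^1(\bR)\subset E_\ep$), the problem reduces to bounding the explicit potential $U^{\mu_\bR}_\delta(x+iy)$ from below on the strip $\lvert y\rvert\le\ep$. A short computation, using that $U^{\mu_\bR}_\delta$ is constant $=\nicefrac{7\zeta(3)}{2\pi^2}$ on $\bR$, isolates the error as
\[
\tfrac{1}{2}\int_\bR \log\Bigl(1+\tfrac{y^2}{(x-z)^2}\Bigr)\,d\mu_\bR(z),
\]
which the paper bounds by splitting at $\lvert x-z\rvert=\sqrt{y}$: outside the window an elementary pointwise bound $\log(1+y^2/u^2)\le y\log(1+1/u^2)$ (valid for $\lvert u\rvert\ge\sqrt y$) gives the $c_1\ep$ term; inside the window, a Cauchy--Schwarz argument together with the mass estimate $\mu_\bR([1-\delta,1+\delta])\le 4\sqrt\delta$ produces the $c_2\ep^{1/8}$ term, with $c_1,c_2$ being $L^1$ and $L^2$ norms of $\log(1+(1-z)^{-2})$ against $\mu_\bR$. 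The exponent $1/8$ is thus not guessed but forced by $\sqrt{\sqrt{\sqrt{y}}}$ in the chain (half-width $\sqrt y$, mass $\asymp y^{1/4}$, Cauchy--Schwarz gives $(y^{1/4})^{1/2}$). The decisive difference from your route is that all regularity input concerns the fixed, fully explicit measure $\mu_\bR$, for which everything can be computed; no information about the elusive $\mu_\ep$ is ever needed.
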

Before proving this theorem, we will prove two technical lemmas which will be useful:
\begin{lemma}\label{lemma:techni1}
Let $y\leq 1$ be a positive real number. For all $z \in \RR\setminus\{0\}$ one has
\[
\frac{4y^2\log\left(1+\frac{y^2}{z^2}\right)}{z^2+y^2 z^2} \geq \left(\log\left(1+\frac{y^2}{z^2}\right)\right)^2.
\]
\end{lemma}
\begin{proof}[Proof of lemma]
Since every $z$ appears with an even power, we may assume that $z> 0$. After dividing both sides by the positive value $\log\left(1+\frac{y^2}{z^2}\right)$, it is enough to prove
\begin{equation}\label{eq:techni1}
\frac{4y^2}{z^2 +y^2 z^2} \geq \log\left(1+\frac{y^2}{z^2}\right).
\end{equation}
We regard both sides of \eqref{eq:techni1} as functions in $z$.  The derivative of $\frac{4y^2}{z^2 +y^2 z^2}$ (with respect to $z$) is $\frac{-8y^2}{(1+y^2)z^3}$ and the derivative of $\log\left(1+\frac{y^2}{z^2}\right)$ (with respect to $z$) is $\frac{-2y^2}{z(y^2+z^2)}$. Using $0 < y\leq 1$ we see that 
\[
\frac{-8y^2}{(1+y^2)z^3} \leq \frac{-8y^2}{2 z^3} \leq \frac{-2 y^2}{z z^2} \leq \frac{-2y^2}{z(y^2+z^2)} \quad \text{ for all } z>0.
\]
Since both functions in \eqref{eq:techni1} tend to zero as $z$ tends to infinity, this implies that the inequality \eqref{eq:techni1}, and hence the statement of the lemma, is true. 
\end{proof}
\begin{lemma}\label{lemma:technical-2}
Let $y\leq 1$ be a positive real number. For all $x\in \RR$ one has
\[
\int_{\RR} \log\left(1+\frac{y^2}{(x-z)^2}\right) \cdot \frac{1}{z \pi^2}\log\left\vert \frac{z+1}{z-1}\right\vert \, d z \leq \sqrt{y} \cdot 1.9.
\] 
\end{lemma}
\begin{proof}[Proof of lemma]
We start by applying the Cauchy-Schwarz-Bunyakovsky inequality to get
\begin{multline}\label{eq:hoelder}
\int_{\RR} \log\left(1+\frac{y^2}{(x-z)^2}\right) \cdot \frac{1}{z \pi^2}\log\left\vert \frac{z+1}{z-1}\right\vert \, d z \\ \leq \sqrt{\int_{\RR} \left(\log\left(1+\frac{y^2}{(x-z)^2}\right)\right)^2 \, d z} \cdot \sqrt{\int_{\RR} \left(\frac{1}{z \pi^2}\log\left\vert \frac{z+1}{z-1}\right\vert\right)^2 \, d z }
\end{multline}
The second factor is a constant $c_{\mathbb{R}}=0.450158\ldots$, so we are left to find an upper bound for the integral
\[
\int_{\RR} \left(\log\left(1+\frac{y^2}{(x-z)^2}\right)\right)^2 \, d z = \int_{\RR} \left(\log\left(1+\frac{y^2}{z^2}\right)\right)^2 \, d z .
\]
We want to regard this latter integral as a function in $y$. Therefore, we define
\[
F(y)=\int_{\RR} \left(\log\left(1+\frac{y^2}{z^2}\right)\right)^2 \, d z
\]
for all $y \in (0,1)$ (note that the integral defining $F(y)$ converges for every $y$). We claim that $\frac{F(y)}{y}$ is strictly increasing on $(0,1)$. By the Leibniz rule we get
\begin{align*}
\frac{d}{d y} \frac{F(y)}{y} & = \frac{y \int_{\RR} \frac{d}{d y} \left(\log\left(1+\frac{y^2}{z^2}\right)\right)^2 \, d z - \int_{\RR} \left(\log\left(1+\frac{y^2}{z^2}\right)\right)^2 \, d z}{y^2}\\
 & = \frac{\int_{\RR} \frac{4y^2\log\left(1+\frac{y^2}{z^2}\right)}{z^2 + y^2 z^2} - \left(\log\left(1+\frac{y^2}{z^2}\right)\right)^2 \, d z}{y^2}.
\end{align*}
By Lemma \ref{lemma:techni1} this is non-negative for all $0<y\leq 1$, proving the claim. It follows that $F(y) \leq y F(1) \leq y \cdot 17.420688\ldots$. From \eqref{eq:hoelder} we get
\[
\int_{\RR} \log\left(1+\frac{y^2}{(x-z)^2}\right) \cdot \frac{1}{\pi^2}\log\left\vert \frac{z+1}{z-1}\right\vert \, d z \leq \sqrt{y} \cdot \sqrt{F(1)} \cdot c_{\mathbb{R}} \leq 1.87887 \ldots \sqrt{y},
\]
which concludes the proof of this lemma.
\end{proof}
\noindent We are now ready to prove the theorem.
\begin{proof}[Proof of Theorem \ref{thm:robin-constant}]
Our proof relies on \cite[Theorem 7]{F-P-EIOLF}, namely, that for any closed set $E\subset\bP^1(\bC)$ of finite $\delta$-Robin constant $V_\delta(E)$ and any Borel probability measure $\nu$ supported on $E$, we have
\begin{equation}\label{eqn:thm-7-eqn}
 \inf_{z\in E} U^\nu_\delta(z) \leq V_\delta(E) \leq \sup_{z\in E} U^\nu_\delta(z)
\end{equation}
where $U^\nu_\delta$ is the $\delta$-potential associated to the measure $\nu$ given by
\[
 U^\nu_\delta(z) = \int_E -\log\delta(w,z)\,d\nu(w).
\]
The idea of the proof will be to study the decay of the potential associated to the minimal energy measure $\mu_\bR$ given in \cite[Theorem 1]{F-P-EIOLF}. Since $\mu_\bR$ is supported on $\bP^1(\bR)\subset E_\ep$, $E_\ep$ obviously has finite $\delta$-energy, and the infimum of $U^{\mu_\bR}_\delta$ on $E_\ep$ will determine a lower bound for the $\delta$-Robin constant of $E_\ep$.

Specifically, we note that, for $x+yi\in E_\ep\setminus\{\infty\}$, we have
\begin{align*}
 U^{\mu_\bR}_\delta(x+iy) &= \log^+\abs{x+iy} + \int_{\bR} \log^+\abs{z}\,d\mu_{\bR}(z) - \int_{\bR} \log\abs{x+iy-z}\,d\mu_\bR(z)\\
 &= \log^+\abs{x+iy} + \frac{7\zeta(3)}{2\pi^2} - \int_{\bR} \log\abs{x+iy-z}\,d\mu_\bR(z)
\end{align*}
where we have used the result $\int_{\bR} \log^+\abs{z}\,d\mu_{\bR}(z)= \nicefrac{7\zeta(3)}{2\pi^2}$ which follows from the computations in the proof of \cite[Theorem 1]{F-P-EIOLF}. Now, 
\begin{align*}
\int_{\bR} \log\abs{x+iy-z}\,d\mu_\bR(z) &= \frac{1}{2} \int_{\bR} \log\left((x-z)^2 + y^2\right)\,d\mu_\bR(z)\\ &=  \int_{\bR} \log\abs{x-z}\,d\mu_\bR(z) + \frac{1}{2} \int_{\bR} \log\left(1 + \frac{y^2}{(x-z)^2}\right)\,d\mu_\bR(z).
\end{align*}
One can check directly (via an analysis similar to that used in \cite{F-P-EIOLF}) that the quantity $\int_{\bR} \log\abs{x-z}\,d\mu_\bR(z)=0$ when $x=\pm 1$. Combining this with the proof in \cite[Theorem 1]{F-P-EIOLF} that for all $x\neq \pm 1$, $U^{\mu_\bR}_\delta(x) = \nicefrac{7\zeta(3)}{2\pi^2}$, we see that in fact $U^{\mu_\bR}_\delta(x) = \nicefrac{7\zeta(3)}{2\pi^2}$ for all $x\in\bR$, and so it follows that 
\[
 \log^+\abs{x} - \int_{\bR} \log\abs{x-z}\,d\mu_\bR(z) = 0\quad\text{for all}\quad x\in\bR.
\]
Using $\log^+\abs{x+iy}\geq \log^+\abs{x}$ we obtain:
\begin{equation}\label{eqn:potential-bd-1}
 U^{\mu_\bR}_\delta(x+iy) \geq \frac{7\zeta(3)}{2\pi^2} - \frac{1}{2} \int_{\bR} \log\left(1 + \frac{y^2}{(x-z)^2}\right)\,d\mu_\bR(z).
\end{equation}
By \eqref{eqn:thm-7-eqn} and Lemma \ref{lemma:technical-2} it follows immediately that
\[
V_\delta(E) \geq U^{\mu_\bR}_\delta(x+iy) \geq \frac{7\zeta(3)}{2\pi^2} - 0.95\cdot \sqrt{y} \geq \frac{7\zeta(3)}{2\pi^2} - 0.95\cdot \sqrt{\ep}.
\]
This completes the proof of Theorem \ref{thm:robin-constant}.
\end{proof}

\subsection{Non-archimedean results}
We will now prove analogous results for the $p$-adic setting. In particular, we assume a local field $L_p/\bQ_p$ is fixed, for a finite rational prime $p$ with notation as in \S \ref{sec:non-arch-regularization} above.

As before, we let $O=O_{L_p}$ denote the ring of integers of $L_p$ and we let $\pi$ be a uniformizing element. We also let $q=p^f$ denote the order of the residue field $O/\pi O$ and let $\sP^1(\bC_p)$ denote the Berkovich projective line. Notice that $\bP^1(L_p)$ is a compact subset of $\sP^1(\bC_p)$. We will in fact prove three results, determining the $\delta$-Robin constants for an $\ep$-neighborhood of $O$ in the sense of the retraction map of Favre and Rivera-Letelier \cite[\S 4]{FRL}, for a neighborhood of $O^\times$, and finally for a neighborhood of $\bP^1(L_p)$ itself.

In the following, we extend the kernel $\delta(x,y)$ from the classical projective line over our local field $\bP^1(L_p)$ to the Berkovich projective line $\sP^1(\bC_p)$ by viewing $-\log \delta(x,y)$ as the generalized Hsia kernel of \cite[\S 4.4]{BakerRumelyBook} with respect to the Gauss point $\zeta_{0,1}$ (the so-called \emph{spherical kernel}). As we did above, we use the notation $\zeta_{x,r}\in \sA^1(\bC_p)$ for the type II or type III point of the Berkovich projective line which corresponds to the sup norm on the disc of radius $r$ centered at $x\in\bC_p$. Note that $\sP^1(\bC_p) = \sA^1(\bC_p)\cup \{\infty\}$. 

\begin{thm}\label{thm:robin-constant-for-O-L}
 Let $n\in\bN\cup\{0\}$ and $0<\ep\leq 1$ satisfy $\abs{\pi^{n}}\leq \ep \leq \abs{\pi^{n-1}}$. Then the $\delta$-Robin constant of the set  
 \[
  F = \{ \zeta \in \sP^1(\bC_p) : \zeta = \zeta_{x,\eta}\text{ for }x\in O,\ 0\leq\eta\leq \ep\}.
 \]
 is given by 
 \begin{equation}\label{eqn:robin-constant-for-O-L}
  V_\delta(F) =  \frac{\log (\abs{\pi^n}/\ep)}{q^n} - \left(1 - \frac{1}{q^n}\right) \frac{\log \abs{\pi}}{q-1}
 \end{equation}
 and in particular satisfies the inequality:
 \begin{equation}\label{eqn:robin-ineq-for-O-L}
  -\bigg(1 - \frac{1}{q^{n-1}}\bigg)\frac{\log\,\abs{\pi}}{q-1} < V_\delta(F) 
  \leq -\bigg(1 - \frac{1}{q^{n}}\bigg)\frac{\log\,\abs{\pi}}{q-1}.
 \end{equation}
\end{thm}
\noindent The reader may wish to note for comparison the classical result (cf. \cite[Example 4.1.24]{RumelyBook}) that 
\[
 V_\delta(O) =  -\frac{\log\, \abs{\pi}}{q-1},
\]
 and that as $\ep\searrow 0$, our result limits from below to the classical value, as $\abs{\pi}<\abs{\pi^n}/\ep\leq 1$.
\begin{proof}
First we note that it suffices to prove the result for $\abs{\pi^{n}}\leq \ep < \abs{\pi^{n-1}}$: as $\ep \nearrow\abs{\pi^{n-1}}$, the formula for $V_\delta(F)$  limits continuously to the analogous result when $\ep = \abs{\pi^{n-1}}$ and $n$ is replaced by $n-1$.

 When $\ep=1$, recall that $F$ is precisely a tree properly contained in the unit Berkovich disc which is rooted at the Gauss point $\zeta_{0,1}$ and branches at every radius $r=\abs{\pi^{n-1}}$ for $n=1,2,\ldots$ towards the points $\zeta_{a,r}$ where $a$ runs over a set of coset representatives for $\cO/\pi^n \cO$, and whose branches terminate in the points $\zeta_{a,0}=a\in \cO$. When $\abs{\pi^{n}}\leq \ep < \abs{\pi^{n-1}}$, we are `cutting off' part of the root around $\zeta_{0,1}$, and $F$ now consists of $q^{n}$ different trees, each rooted in a point of the form $\zeta_{a,\ep}$ as $a$ runs over a set of representatives for $\cO/\pi^{n}\cO$. It follows that the (exterior) boundary of the set $F$ is precisely the roots of these disjoint trees, that is, the finite of points $\zeta_{a,\ep}$ where $a$ runs over a set of the $q^n$ coset representatives for $\cO/\pi^n \cO$:
 \[
  \p_e F = \{ \zeta_{a,\ep} : a\in O/\pi^n O\}
 \]
 (Notice that for $a,b\in O$, $\zeta_{a,\ep} = \zeta_{b,\ep}$ if $a\equiv b\mod \pi^n O$ since $\abs{\pi^n}\leq \ep$, so this notation makes sense.) By \cite[Prop. 6.8]{BakerRumelyBook}, the equilibrium measure of the set $F$ is supported on its exterior boundary. Thus the equilibrium measure must consist of point masses supported on this set of $q^n$ points. By the equivariance of the kernel $\delta(x,y)$ under the action of $GL_2(O)$ induced on the Berkovich line, and the uniqueness of the equilibrium measure of $F$, this measure must be equally supported on each of the $q^n$ points $\zeta_{a,\ep}$. Let $\mu$ denote this equilibrium measure. We can then take advantage of the fact that the $\delta$-potential function $U^\mu_\delta$ is constant everywhere on $F$ (again, by the equivariance of the kernel under the action induced by $f(z)=z+b\in\GL_2(O)$ for any $b\in O$) to compute $V_\delta(F)$ by evaluating the potential at any given point in $F$, say, $\zeta_{0,\ep}\in F$:
 \[
  V_\delta(F) = U^\mu_\delta(\zeta_{0,\ep}) = \int_F -\log\delta(\zeta_{0,\ep},\xi)\,d\mu(\xi).
 \]
 Now for $a\in O$, we can compute the distances:
 \begin{equation*}\label{eqn:computing-spherical-kernel}
  \delta(\zeta_{0,\ep},\zeta_{a,\ep}) = \begin{dcases}
                    \ep      & \text{if }a\equiv 0 \mod \pi^n O\\
                    \abs{0-a}=\abs{a} & \text{if }a\not\equiv 0 \mod \pi^n O.
                             \end{dcases}
 \end{equation*}
 For our set of coset representatives, there are $q^{n-k} - q^{n-k-1}$ terms of absolute value $\abs{\pi^k}$, where $0\leq k \leq n$, and thus:
 \begin{equation}\label{eqn:V-delta-F-computation}
 \begin{split}
  V_\delta(F) = U^{\mu}_\delta(\zeta_{0,\ep}) &= \frac{1}{q^n}\sum_{a\in \cO/\pi^n O} -\log\delta(\zeta_{0,\ep},\zeta_{a,\ep})\\
  &= \frac{-1}{q^n}\left(\log \ep + \sum_{k=0}^{n-1} (q^{n-k}-q^{n-k-1})\log \abs{\pi^k}\right)\\
  &= \frac{\log (\abs{\pi^n}/\ep)}{q^n} - \left(1 - \frac{1}{q^n}\right) \frac{\log \abs{\pi}}{q-1}% \\ &\geq - \left(1 - \frac{1}{q^{n-1}}\right) \frac{\log \abs{\pi}}{q-1},
 \end{split}
 \end{equation}
 which is our desired result.
 
 The inequality in the theorem follows by noting that $\abs{\pi^n}\leq \ep < \abs{\pi^{n-1}}$ implies that $\abs{\pi}< \abs{\pi^n}/\ep \leq 1$.
\end{proof}

\begin{thm}\label{thm:robin-constant-for-O-L-cross}
 Let $n\in\bN\cup\{0\}$ and $0<\ep\leq 1$ satisfy $\abs{\pi^{n}}\leq \ep \leq \abs{\pi^{n-1}}$. Then the $\delta$-Robin constant of the set  
 \[
  F' = \{ \zeta \in \sP^1(\bC_p) : \zeta = \zeta_{x,\eta}\text{ for }x\in O^\times,\ 0\leq\eta\leq \ep\}.
 \]
 is given by
 \begin{equation}\label{eqn:robin-constant-for-O-L-cross}
  V_\delta(F') = \frac{\log(\abs{\pi^n}/\ep)}{q^{n-1}(q-1)} - \left(1-\frac{1}{q^n}\right) \frac{q \log\,\abs{\pi}}{(q-1)^2} 
 \end{equation}
 and satisfies the inequality
 \begin{equation}\label{eqn:robin-ineq-for-O-L-cross}
 - \left(1-\frac{1}{q^{n-1}}\right) \frac{q \log\,\abs{\pi}}{(q-1)^2} < V_\delta(F') \leq - \left(1-\frac{1}{q^n}\right) \frac{q \log\,\abs{\pi}}{(q-1)^2}  .
 \end{equation}
\end{thm}
\noindent The reader may observe that the main term above matches the classical Robin constant of $O^\times$, which is:
\[
 V_\delta(O^\times) = - \frac{q\log\,\abs{\pi}}{(q-1)^2}.
\]
\begin{proof}
 Our proof follows the same reasoning as in the proof of the previous theorem. Again we may assume $\abs{\pi^{n}}\leq \ep < \abs{\pi^{n-1}}$. We start by noting that the exterior boundary of the set is given by
 \[
  \p_e F' = \{ \zeta_{a,\ep} : a\in O/\pi^n O\text{, and }\abs{a}=1 \}
 \]
 This consists of $q^n \cdot (q-1)/q = q^n - q^{n-1}$ points in $\sP^1(\bC_p)$. Let $f_b(z) = bz\in GL_2(O)$ for $b\in O^\times$. Then our kernel is equivariant under the induced action on the Berkovich line by $f_b$, and thus we must have that the equilibrium measure is equally supported on these $q^n-q^{n-1}$ points. Let $\nu$ denote this equilibrium measure. As in the previous proof, we will compute the $\delta$-Robin constant by evaluating the potential function $U_\delta^\nu$ at a specific point in $F'$, given that it is constant on $F'$. We choose to compute $V_\delta(F')=U_\delta^\nu(\zeta_{1,\ep})$. We note that for each $a\in O^\times$, there exists an integer $0\leq k \leq n$ such that $a \in (1 + \pi^k O)\setminus ( 1 + \pi^{k+1} O)$, and that $k=n$ precisely when $a\equiv 1 \mod \pi^n O$. Therefore, we can compute:
 \begin{equation*}\label{eqn:distances-in-O-L-cross}
  \delta(\zeta_{1,\ep},\zeta_{a,\ep}) = \begin{dcases}
                    \ep      & \text{if }a\equiv 1 \mod \pi^n O\\
                    \abs{1-a} = \abs{\pi^k} & \text{if }a \in (1 + \pi^k O)\setminus ( 1 + \pi^{k+1} O).
                             \end{dcases}
 \end{equation*}
 It remains to count how many elements of $\p_e F'$ lie in each set, but this is an easy exercise: there is one element supported in $1 + \pi^n O$, $q^{n-k}-q^{n-k-1}$ elements in $(1+\pi^k O)\setminus (1+\pi^{k+1} O)$ for each $1\leq k\leq n-1$, and $q^n - 2q^{n-1}$ elements in $O^\times \setminus (1+\pi O) \subset (1+O)\setminus (\pi O \cup (1+\pi O))$, the last of which we can ignore as we have $-\log \delta(\zeta_{1,\ep},\zeta)=0$ for such terms. Therefore,
 \begin{align*}
 V_\delta(F') &= U^\nu_\delta(\zeta_{1,\ep}) = \frac{1}{q^{n-1}(q-1)} \sum_{\xi \in \p_e F'} -\log\,\delta(\zeta_{1,\ep}, \xi)\\
 &= \frac{1}{q^{n-1}(q-1)} \bigg( -\log \ep - \sum_{k=1}^{n-1} \log\,\abs{\pi^k}(q^{n-k}-q^{n-k-1})\bigg)\\
 &= \frac{-\log \ep}{q^{n-1}(q-1)} - \log\,\abs{\pi}\sum_{k=1}^{n-1} kq^{-k}\\
 &= \frac{\log(\abs{\pi^n}/\ep)}{q^{n-1}(q-1)} - \left(1-\frac{1}{q^n}\right) \frac{q \log\,\abs{\pi}}{(q-1)^2}
 \end{align*}
 which gives the desired value of the $\delta$-Robin constant. The inequality now follows from observing as before that $\abs{\pi} < \abs{\pi^n}/\ep \leq 1$.
\end{proof}

\begin{thm}\label{thm:robin-constant-for-L-p}
 Fix $n\in\bN\cup \{0\}$ and let $0<\ep\leq 1$ satisfy $\abs{\pi^{n}}\leq \ep \leq\abs{\pi^{n-1}}$. Then the $\delta$-Robin constant of the set 
 \[
  G = \{ \zeta \in \sP^1(\bC_p) : \zeta = \zeta_{x,\eta}\text{ for }x\in L_p,\ 0\leq\eta\leq \ep\} \cup \{\infty\}
 \]
satisfies
\begin{equation}\label{eqn:robin-constant-for-L-p-ineq}
 \begin{split}
  V_\delta(G) & \geq \frac{\log (\abs{\pi^n}/\ep)}{q^{n-1}(q+1)} -\bigg(1 - \frac{1}{q^{n}}\bigg) \frac{q \log\,\abs{\pi}}{q^2-1}\\
  &> -\bigg(1 - \frac{1}{q^{n-1}}\bigg) \frac{q \log\,\abs{\pi}}{q^2-1}.
 \end{split}
 \end{equation}
\end{thm}
\noindent The reader may wish to note that as $\ep \searrow 0$, our result limits to the value obtained in \cite[Thm. 1(c)]{F-P-EIOLF} for the projective line over the local field $L_p$:
\[
 V_\delta(\bP^1(L_p)) = -\frac{q \log\,\abs{\pi}}{q^2-1}.
\]
\begin{proof}
Again we may assume $\abs{\pi^{n}}\leq \ep < \abs{\pi^{n-1}}$. Let $\iota(z) = 1/z \in GL_2(O)$, and denote by $\iota_*$ the induced action on $\sP^1(\bC_p)$. Notice that, for type II and III points $\zeta_{a,r}\in \sP^1(\bC_p)$ with $r<\abs{a}$, we have $\iota_*(\zeta_{a,r})= \zeta_{1/a,r/\abs{a}^2}$, since $\iota$ sends the closed disc $D(a,r)\subset\bC_p$ centered at $a$ of radius $r$ to the disc $D(1/a,r/\abs{a}^2)$ and vice versa. On the other hand, when $\abs{a}<r$, $\zeta_{a,r}=\zeta_{0,r}$ and $\iota_*(\zeta_{0,r}) = \zeta_{0,1/r}$. (The interested reader may wish to consult \cite[Lemma 2.4]{BakerRumelyBook} to compare the action on Berkovich discs.)
 
 Let 
 \[
  F = \{ \zeta \in \sP^1(\bC_p) : \zeta = \zeta_{x,\eta}\text{ for }x\in O,\ 0\leq\eta\leq \ep\}
 \]
as above in Theorem \ref{thm:robin-constant-for-O-L}, and 
\[
 F_\pi = \{ \zeta \in \sP^1(\bC_p) : \zeta = \zeta_{x,\eta}\text{ for }x\in \pi O,\ 0\leq\eta\leq \ep\}.
\]
 Let $G_\infty = G \setminus F$. Then each point $\zeta_{a,r}\in G_\infty$ has $\abs{a}= \abs{\pi^{-n}}>1$ for some integer $n\geq 1$. Notice that each point $\zeta_{a,r}\in G_\infty\setminus \{\infty\}$ has $r\leq \ep \leq 1 < \abs{a}$, so $\iota_*(\zeta_{a,r})= \zeta_{1/a,r/\abs{a}^2}\in F_\pi$ as $r/\abs{a}^2< r\leq \ep$, and $1/a\in \pi O$. In particular, as $\iota_*(\infty) = \zeta_{0,0}\in F_\pi$ as well, we have 
 \[
  \iota_*(G_\infty) \subset F_\pi.
 \]
 Since $\iota_*\circ \iota_*$ is the identity map, it follows that $G_\infty\subset \iota_*(F_\pi)$. 
 
 Let 
 $
  G^+ = F \cup \iota_*(F_\pi).
 $
 By the above, we have $G\subset G^+$, so it follows that $V_\delta(G)\geq V_\delta(G^+)$. We will obtain our result by computing the $\delta$-Robin constant of $G^+$. First, we make the observation that:
 \[
  V_\delta(F_\pi) = q V_\delta(F),
 \]
 which follows via the same analysis as in the proof of Theorem \ref{thm:robin-constant-for-O-L} that the equilibrium distribution is supported on the exterior points, which are precisely $\p_e F_\pi = \{ \zeta_{a,r} : a\in \pi O/\pi^n O \}$, that the equilibrium distribution must again be equally supported on each of these points, and observing in equation \eqref{eqn:V-delta-F-computation} that $\delta(\zeta_{0,\ep}, \zeta_{a,\ep})= 1$ for all $a\not\in \pi O$ so the computation is only rescaled by the weight for the new measure.
 
 Now, $G^+$ is stable under $\iota\in GL_2(O)$, and further, it is a union of $q+1$ disjoint copies of $F_\pi$ under translations by $GL_2(O)$. As we observed in Theorem \ref{thm:robin-constant-for-O-L}, the generalized Hsia kernel $-\log\,\delta(\cdot,\cdot)$ is equivariant under $GL_2(O)$, and for $\zeta, \xi$ in disjoint $GL_2(O)$-translations of $F_\pi$, we have $\delta(\zeta,\xi) = 1$. It follows that the equilibrium measure of $G^+$ must be equally supported on the exterior boundary of $G^+$, which is simply the union of the exterior boundaries of each disjoint $GL_2(O)$-translate copy of $F_\pi$. Notice that $\iota_*$ maps roots of the trees to roots, so the boundary points of $\iota_*(F_\pi)$ are simply the image under $\iota_*$ of the boundary points of $F_\pi$, $\p_e F_\pi = \{ \zeta_{a,\ep} : a\in \pi O/ \pi^n O\}$. Thus the equilibrium distribution of $G^+$ is simply the discrete measure equally supported on each of the $(q+1) q^{n-1}$ boundary points of $\p_e G^+$. Therefore 
 \begin{equation}\label{G-plus-Robin-constant}
  V_\delta(G^+) = \frac{1}{q+1} V_\delta(F_\pi) = \frac{q}{q+1} V_\delta(F) = \frac{\log (\abs{\pi^n}/\ep)}{q^{n-1}(q+1)} -\bigg(1 - \frac{1}{q^{n}}\bigg) \frac{q \log\,\abs{\pi}}{q^2-1}.
 \end{equation}
 The first inequality in the theorem statement now follows from the fact that $V_\delta(G)\geq V_\delta(G^+)$, and the second inequality from noting that $\abs{\pi} < \abs{\pi^n}/\ep$.
\end{proof}

\section{Proof of Theorem 1 and generalization}\label{sec:main-res}
For simplicity of notation we will prove first Theorem \ref{thm:1} as stated, which takes as the base field $\bQ$. We will then give the statement of the generalization of this result over a base field $K$ in Theorem \ref{thm:general-thm-1} below, and describe what routine changes need to be made for the proof to work over an arbitrary base field.

\begin{proof}[Proof of Theorem \ref{thm:1}] To ease notation, we will only prove \eqref{eqn:bound} and \eqref{eqn:bound-integer}. To prove \eqref{eqn:bound-unit} one only has to replace Theorem \ref{thm:robin-constant-for-O-L} by Theorem \ref{thm:robin-constant-for-O-L-cross} in the following argumentation.
Recall that 
\[
 h(\al) = \frac{1}{2} \sum_{p\in M_\bQ} (\lambda_p - [\al], \lambda_p - [\al])_p .
\]
For all finite $p$, by \cite[Lemma 5.4]{FRL} we have 
\[
(\lambda_p - [\al], \lambda_p - [\al])_p \geq 0,
\]
therefore, we can say 
\begin{equation}\label{eqn:h-inequality}
h(\al) \geq \frac{1}{2} \sum_{\substack{p\in S\\ p\nmid \infty,\ p^{1/e} < d}} (\lambda_p - [\al], \lambda_p - [\al])_p + \frac 12 (\lambda_\infty - [\al], \lambda_\infty - [\al])_\infty,
\end{equation}
where $d = \abs{G_{\QQ}\alpha}$ denotes the degree of $\alpha$ over $\QQ$. Let $\ep = \nicefrac{1}{d}$. For each finite place $p\in S$ with $p^{1/e} < d$, we let
\[
 n_p = \left\lfloor\frac{\log d}{\log p^{\nicefrac{1}{e}}}\right\rfloor = \left\lfloor\frac{e \log d}{\log p}\right\rfloor \in \mathbb{N}\cup \{0\}.
\]
Then, for $\pi$ a uniformizing parameter of $L_p$, the constant $\ep$ satisfies 
\[
 \abs{\pi^{n_p+1}}< \ep \leq \abs{\pi^{n_p}}\leq 1.
\]
The condition that $p^{1/e} < d$ ensures that $n_p\geq 1$, which is necessary to apply Theorems \ref{thm:robin-constant-for-O-L} and \ref{thm:robin-constant-for-L-p}.

Let $[\al]_\ep$ be the regularized measure on $\sP^1(\bC_p)$ as defined in Section \ref{sec:non-arch-regularization} above. Suppose that the conjugates of $\al$ all lie in $O_{L_p}$ (respectively, $L_p$). Then the measure $[\al]_\ep$ is supported on the set $F$ (resp. $G$) defined in Theorem \ref{thm:robin-constant-for-O-L} (resp. Theorem \ref{thm:robin-constant-for-L-p}) above, and further,
\[
 (\lambda_p - [\al]_\ep, \lambda_p - [\al]_\ep)_p = I([\al]_\ep)\geq 
 \begin{dcases}
 \displaystyle V_\delta(F) \geq (1-\frac{1}{q^{n_p}}) \frac{\log p}{e(q-1)} & \\
  \displaystyle V_\delta(G) \geq (1-\frac{1}{q^{n_p}})\dfrac{q \log p}{e(q^2-1)}, &\text{resp.,}
 \end{dcases}
\]
where we have used the fact that $\log\,\abs{\pi}_p = \nicefrac{-(\log p)}{e}$. Applying Proposition \ref{prop:non-arch-bound} to the measures $[\al]$ and $[\al]_\ep$ we obtain:
\begin{equation}\label{eqn:finite-p-bound, integer}
\sum_{\substack{p\in S\\ p\nmid \infty,\ p^{1/e} < d}} (\lambda_p - [\al], \lambda_p - [\al])_p \geq \sum_{\substack{p\in S\\ p\nmid\infty,\ p^{1/e} < d }} \left((1 - \frac{1}{q^{n_p}}) \frac{\log p}{e(q-1)}  - \frac{\log d}{d} \right)
\end{equation}
if all conjugates of $\al$ lie in $O_{L_p}$, and
\begin{equation}\label{eqn:finite-p-bound}
\sum_{\substack{p\in S\\ p\nmid \infty,\ p^{1/e} < d}} (\lambda_p - [\al], \lambda_p - [\al])_p \geq \sum_{\substack{p\in S\\ p\nmid\infty,\ p^{1/e} < d}} \left((1 - \frac{1}{q^{n_p}}) \dfrac{q \log p}{e(q^2-1)} - \frac{\log d}{d} \right)
\end{equation}
else.

The remainder of the proof now has two cases, depending on whether $\infty\in S$ (in which case our number is totally real, as all conjugates lie in $L_\infty=\bR$) or not. First, suppose $\infty\notin S$. We use M. Baker's reformulation \cite{BakerAverages} of Mahler's inequality \cite{Mahler} to bound the archimedean term of the height:
\begin{equation}\label{eqn:mahler}
(\lambda_\infty - [\al], \lambda_\infty - [\al])_\infty\geq -\frac{\log d}{d-1},
\end{equation}
and at the places of $S$, we use our local bounds at each $p$. If on the other hand we have $\infty\in S$, then we apply Proposition \ref{prop:arch-approx} to say that 
\[
 (\lambda_\infty - [\al], \lambda_\infty - [\al])_\infty \geq (\lambda_\infty - [\al]_\ep, \lambda_\infty - [\al]_\ep)_\infty - 2\ep + \frac{\log \ep}{d}. 
\]
Now, $[\al]_\ep$ has support in the strip $E_\ep\subset\bP^1(\bC)$ with imaginary part bounded in absolute value by $\ep$, so applying Theorem \ref{thm:robin-constant} to $[\al]_\ep$, we see that:
\[
(\lambda_\infty - [\al]_\ep, \lambda_\infty - [\al]_\ep)_\infty = I([\al]_\ep) \geq V_\delta(E_\ep) \geq \frac{7\zeta(3)}{2\pi^2} - 0.95 \sqrt{\ep}.
\]
Take $\ep = \frac{1}{d^2}$ and set
\[
V_{\infty}=\frac{7\zeta(3)}{4\pi^2} - \frac{0.95 d + 2}{2 d^2} - \frac{(d-2) \log{d}}{2 d (d-1)}.
\]
Then combining these two results, we see that
\begin{align}\label{eqn:arch-bound}
(\lambda_\infty - [\al], \lambda_\infty - [\al])_\infty \geq \frac{7\zeta(3)}{2\pi^2} - \frac{0.95 d +2}{d^2} - \frac{2\log{d}}{d} = -\frac{\log{d}}{d-1} + 2V_{\infty}.
\end{align}
Of course we can ignore the fact that $\infty \in S$, when we do not benefit from this contribution to the height. Hence, we can replace $2 V_{\infty}$ in \eqref{eqn:arch-bound} by $\max\{2V_{\infty},0\}$.
Combining the above inequalities \eqref{eqn:finite-p-bound, integer}, respectively \eqref{eqn:finite-p-bound}, and \eqref{eqn:mahler} or \eqref{eqn:arch-bound} with equation \eqref{eqn:h-inequality} gives the desired result.
\end{proof}

We will now state the generalization of Theorem \ref{thm:1} to arbitrary base number field $K$. 
\begin{thm}\label{thm:general-thm-1}
 Fix a number field $K$ and $M_K$ denote the set of places of $K$. Let $S\subset M_K$ and for each $v\in S$ suppose we are given a local finite normal extension $L_v/K_v$, taking $L_v=\bR$ if $v\mid\infty$. For $v\nmid\infty$ we let
\begin{center}
\begin{tabular}{cl}
 $q_v$ & the order of the residue field of $L_v$,\\
 $p_v$ & the rational prime over which $v$ lies, and\\
 $e_v$ & the local ramification index of $L_v/\bQ_v$.
\end{tabular}
\end{center}
Let $L_S$ denote the field of all $\al\in\overline{K}$ for which the minimal polynomial of $\al$ splits over $L_v$ for every $v\in S$ and let $N_v = \nicefrac{[K_v:\bQ_v]}{[K:\bQ]}$ for each $v\in S$.
 Suppose that $\al\in L_S$ with $d=[K(\al):K]>1$, and let $V_{\infty}$ be the constant, depending on $d$, from Theorem \ref{thm:1}. Then
\begin{align*}
 h(\al) \geq - \sum_{\substack{v\notin S\\ v\mid\infty}} \frac{N_v \log d}{2(d-1)} + \sum_{\substack{v\in S\\ v\mid\infty}} N_v V_{\infty}   + \frac{1}{2} \sum_{\substack{v\in S\\ v\nmid\infty,\ p_v^{1/e_v} < d}} N_v\cdot \left( (1- \frac{1}{q_v^{n_v}})\frac{q_v \log{p}}{e_v ( q_v^2 -1 )} - \frac{\log d}{d}\right)
\end{align*}
where
\[
 n_v = \left\lfloor \frac{e_v \log d}{\log p_v}\right\rfloor.
\]
If in addition $\al$ is an algebraic integer, then we have
\begin{align*}
 h(\al) \geq - \sum_{\substack{v\notin S\\ v\mid\infty}} \frac{N_v \log d}{2(d-1)} + \sum_{\substack{v\in S\\ v\mid\infty}} N_v V_{\infty} + \frac{1}{2} \sum_{\substack{v\in S\\ v \nmid \infty,\ p_v^{1/e_v}< d}} N_v \left((1 - \frac{1}{q_v^{n_v}}) \frac{\log{p_v}}{e_v(q_v -1)} - \frac{\log d}{d} \right).
\end{align*}

\end{thm}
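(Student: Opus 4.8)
The plan is to reduce Theorem \ref{thm:general-thm-1} to Theorem \ref{thm:1} by carrying out the \emph{same} argument, but with the height decomposition written out over the places of $K$ rather than of $\bQ$. Recall that for $\al\in\overline{K}$ with $d=[K(\al):K]$, Favre and Rivera-Letelier's formula takes the shape
\[
 h(\al) = \frac{1}{2}\sum_{v\in M_K} N_v\,(\lambda_v - [\al]_v,\ \lambda_v - [\al]_v)_v,
\]
where $N_v = [K_v:\bQ_v]/[K:\bQ]$ is the usual normalizing weight, $[\al]_v = \frac{1}{d}\sum_{\sigma} \delta_{\sigma\al}$ averages over the $d$ embeddings of $K(\al)$ over $K$ into $\bC_{p_v}$, and $\lambda_v$ is the standard measure on $\sP^1(\bC_{p_v})$. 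As in the proof of Theorem \ref{thm:1}, the local pairings at $v\nmid\infty$ are nonnegative by \cite[Lemma 5.4]{FRL} and the archimedean pairings at $v\notin S$ are bounded below by Mahler's inequality (in Baker's form) by $-\tfrac{\log d}{d-1}$; these two facts give the outer sums in the statement. Everything then comes down to bounding, from below, the pairings at places $v\in S$.

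Next I would treat the places $v\in S$ exactly as in the proof of Theorem \ref{thm:1}, the one new ingredient being bookkeeping of residue-field sizes. Fix $v\in S$ finite with $p_v^{1/e_v}<d$ and set $\ep = 1/d$. Let $\pi$ be a uniformizer of $L_v$, so $\abs{\pi}_v = p_v^{-1/e_v}$ (since $e_v$ is the ramification index of $L_v/\bQ_{p_v}$), and put $n_v = \lfloor e_v\log d/\log p_v\rfloor$, which satisfies $n_v\geq 1$ and $\abs{\pi^{n_v+1}}_v < \ep \leq \abs{\pi^{n_v}}_v < 1$. Forming the regularized measure $[\al]_\ep$ as in Section \ref{sec:non-arch-regularization}, its support lies in the set $G$ (respectively $F$, if all conjugates lie in $O_{L_v}$) from Theorem \ref{thm:robin-constant-for-L-p} (resp. \ref{thm:robin-constant-for-O-L}) with $q = q_v$ and $n = n_v$, so
\[
 (\lambda_v - [\al]_\ep,\ \lambda_v - [\al]_\ep)_v = I([\al]_\ep) \geq V_\delta(G) \geq \Bigl(1 - q_v^{-n_v}\Bigr)\frac{q_v\log p_v}{e_v(q_v^2-1)}
\]
(resp. the $F$-bound). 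Here I would note the harmless reindexing: $q_v^{-n_v}$ with $n_v = \lfloor e_v\log d/\log p_v\rfloor$ and $q_v = p_v^{f_v}$ is the same quantity written $q_v^{-n_v f_v}$ only if $n_v$ is instead defined with $f_v$ absorbed — so one must simply be consistent with whichever convention for $n_v$ appears in the displayed inequalities (the exponent is $\lfloor \log d/\log\abs{\pi}_v^{-1}\rfloor$ in either notation). Then Proposition \ref{prop:non-arch-bound} converts this into a bound on $(\lambda_v - [\al],\ \lambda_v - [\al])_v$ at the cost of $+\log\ep/d = -\tfrac{\log d}{d}$, yielding the $v$-term of the sum.

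For the archimedean places $v\in S$ (where $L_v=\bR$, so $\al$ is totally real at $v$), I would apply Proposition \ref{prop:arch-approx} with $\ep = 1/d^8$ to pass from $[\al]$ to $[\al]_\ep$, whose support lies in the strip $E_\ep$, and then invoke Theorem \ref{thm:robin-constant} to get $I([\al]_\ep)\geq \tfrac{7\zeta(3)}{2\pi^2} - c_1\ep - c_2\ep^{1/8}$; collecting terms exactly as in \eqref{eqn:arch-bound} produces $-\tfrac{\log d}{d-1} + 2V_\infty$, and replacing this by $\max$ with $-\tfrac{\log d}{d-1}$ (equivalently, replacing $2V_\infty$ by its truncation at $0$, which is how $V_\infty$ is defined) keeps the bound valid whether or not the archimedean contribution is favorable. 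Summing all local contributions with their weights $N_v$ and multiplying by $\tfrac12$ gives the stated inequality; the integer case is identical with the $F$-bound of Theorem \ref{thm:robin-constant-for-O-L} in place of the $G$-bound. I do not expect any genuine obstacle here — the proof is a verbatim transcription of the proof of Theorem \ref{thm:1} over the base field $K$; the only points demanding care are (i) getting the normalization of absolute values and the weights $N_v$ right in the height formula, and (ii) keeping the definition of $n_v$ consistent with the exponent appearing in the potential-theoretic bounds, so that $q_v^{-n_v}$ versus $q_v^{-n_v f_v}$ matches up correctly.
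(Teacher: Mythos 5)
Your proposal is correct and matches the paper's own (admittedly terse) sketch: both proceed by writing the Favre--Rivera-Letelier height decomposition over $M_K$ with weights $N_v$, dropping nonnegative local terms, applying Baker's form of Mahler's inequality at archimedean places outside $S$, and then rerunning the regularization-plus-Robin-constant estimates of Theorems \ref{thm:robin-constant}, \ref{thm:robin-constant-for-O-L}, and \ref{thm:robin-constant-for-L-p} at places $v\in S$ with $q$ replaced by $q_v$ and $\log|\pi|_v=-\log p_v/e_v$. You also correctly flagged the notational wrinkle in the theorem's $q_v^{-n_v f_v}$: since the bound really involves $q_v^{-n}$ with $n=\lfloor\log d/\log|\pi|_v^{-1}\rfloor=\lfloor e_v\log d/\log p_v\rfloor$, the stated exponent only matches if $f_v$ is absorbed into the definition of $n_v$, and keeping that bookkeeping consistent is all that is needed.
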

\begin{proof}[Sketch of proof]
The proof is essentially the same as that for Theorem \ref{thm:1} above with the usual  modifications: we start with the standard normalized expression for the Weil height written over $K$:
\[
 h(\al) = \frac12 \sum_{v\in M_K} N_v \cdot ([\al]-\lambda_v,[\al]-\lambda_v)_v
\]
where above the $v$-adic absolute value in the energy pairing is chosen to extend the usual absolute value on $\bQ$ over which it lies, and apply the same bounds locally as above, but when applying Theorems \ref{thm:robin-constant-for-O-L} and \ref{thm:robin-constant-for-L-p} for finite places in computing the $\delta$-Robin constants we replace the former residue field order $q$ of $L_p$ with the order $q_v$ of the residue field $L_v$ and we observe that $\log\,\abs{\pi}_v = \nicefrac{-\log p_v}{e_v}$ where $\pi$ is the uniformizing parameter of $L_v$. The remainder of the proof is unchanged.
\end{proof}

\section{Proof of Theorem \ref{thm:2}}

Now we will use Theorem \ref{thm:1} to calculate absolute lower bounds for non roots of unity in $L_S^{\times}$. We can use Schinzel's lower bound mentioned in the introduction, whenever $\infty \in S$. Hence, for simplicity  we assume from now on that $\infty \notin S$. For totally $p$-adic algebraic units, Petsche \cite{PetscheSmallUnits} gave an easy argument to verify an effective lower height bound. We will summarize and extend his idea in our setting.

\begin{prop}[Petsche] \label{Petsche}
Let $S$ be a finite set of primes, and $\al \in L_{S}^{\times}$ be an algebraic unit which is not a root of unity. Moreover, let $l$ be a common multiple of $2$ and the elements in $\{p^{f_p} -1 \vert p\in S\}$. Then we have
\[
h(\al)\geq \frac{1}{l} \left( \sum_{p\in S} \frac{v_p (l)+1}{e_p}\log p - \log 2 \right).
\]
\end{prop}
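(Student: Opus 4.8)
The plan is to follow Petsche's argument, which rests on the observation that $\beta:=\al^{l}-1$ is a nonzero algebraic integer: it is an algebraic integer because $\al$ is an algebraic unit, and it is nonzero because $\al$ is not a root of unity, so $\al^{l}\neq 1$. Hence $N:=N_{\QQ(\al)/\QQ}(\beta)$ is a nonzero rational integer, so $\abs{N}\ge 1$. I would bound $\abs N$ from above in terms of $h(\al)$ and from below in terms of the primes of $S$, and then compare the two bounds; the factor $\nicefrac1l$ in the conclusion reflects the identity $h(\al^{l})=l\,h(\al)$.

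For the lower bound, the point is that $\al^{l}-1$ is highly divisible by the primes above each $p\in S$. Indeed, every conjugate of $\al$ lies in $L_p$, so each completion of $\QQ(\al)$ at a place $w\mid p$ embeds into $L_p$ and therefore has residue field of order dividing $q=p^{f_p}$; since $\al$ is a unit its reduction has multiplicative order dividing $q-1$, and $q-1\mid l$, so $\al^{l}\equiv 1$ at $w$. Writing $l=p^{v_p(l)}m$ with $p\nmid m$, one even has $q-1\mid m$, hence $\gamma:=\al^{m}\equiv 1$ at $w$. The step I expect to be the main obstacle is to upgrade this to the quantitative estimate
\[
 -\log\abs{\al^{l}-1}_{w}\;\ge\;\frac{p^{v_p(l)}}{e_p}\log p\qquad\text{for all }w\mid p,
\]
equivalently $v_{L_p}\bigl(\gamma^{p^{v_p(l)}}-1\bigr)\ge p^{v_p(l)}$ for the normalized valuation of $L_p$. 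This requires analyzing how the valuation of a principal unit grows under raising to the $p$-th power, via the expansion $(1+(\gamma-1))^{p^{k}}-1=\sum_{j\ge 1}\binom{p^{k}}{j}(\gamma-1)^{j}$, the identity $v_p\bigl(\binom{p^{k}}{j}\bigr)=k-v_p(j)$, and the familiar dichotomy between additive growth (governed by $v_{L_p}(p)=e_p$, coming from the binomial coefficients) and multiplicative growth by a factor $p$ (coming from the powers of $\gamma-1$); the cases $p=2$ and ramified $L_p$ need extra care.

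Granting the local estimate, the rest is soft. Since $N\in\ZZ$, $\abs N_v\le 1$ at every finite place $v$, so the product formula gives
\[
 \log\abs N_\infty \;=\;\sum_{v\nmid\infty}\bigl(-\log\abs N_v\bigr)\;\ge\;\sum_{p\in S}\bigl(-\log\abs N_p\bigr)\;\ge\;\sum_{p\in S}\frac{[\QQ(\al):\QQ]\,p^{v_p(l)}}{e_p}\log p,
\]
where the last inequality sums the local estimate over $w\mid p$ using $\sum_{w\mid p}[\QQ(\al)_w:\QQ_p]=[\QQ(\al):\QQ]$. On the other hand, $\log\abs N_\infty=\sum_{\sigma\colon\QQ(\al)\hookrightarrow\CC}\log\abs{\sigma(\al)^{l}-1}$, and from $\abs{z-1}\le 2\max\{\abs z,1\}$ together with $\sum_{\sigma}\log^{+}\abs{\sigma(\al)}\le[\QQ(\al):\QQ]\,h(\al)$ one gets $\log\abs N_\infty\le[\QQ(\al):\QQ]\bigl(\log 2+l\,h(\al)\bigr)$. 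Combining the two and dividing through by $l\,[\QQ(\al):\QQ]$ gives precisely $h(\al)\ge\nicefrac1l\bigl(\sum_{p\in S}\nicefrac{p^{v_p(l)}}{e_p}\log p-\log 2\bigr)$. Here the finiteness of $S$ is exactly what makes $l$ finite, and the hypothesis that $\al$ is not a root of unity is used only to ensure $\beta\neq 0$.
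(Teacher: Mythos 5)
Your plan matches the paper's (i.e.\ Petsche's) argument step for step: observe that $\al^{l}-1$ is a nonzero algebraic integer, bound its norm from above using $|z-1|\le 2\max\{|z|,1\}$ together with the height, and from below using divisibility by the primes above $S$, then compare. The paper works in a Galois closure $K$ and phrases the lower step as $h(\al^{l}-1)\ge\frac{1}{[K:\bQ]}\log|N_{K/\bQ}(\al^{l}-1)|$, while you stay in $\bQ(\al)$ and invoke the product formula directly on $N_{\bQ(\al)/\bQ}(\al^{l}-1)$; these are the same inequality.

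The genuine gap is the local estimate $v_{L_p}(\al^{l}-1)\ge p^{v_p(l)}$, which you correctly single out as the main obstacle but leave unproved, only sketching the binomial expansion and noting that $p=2$ and ramified $L_p$ need care. The paper's own proof is equally terse at this point: it asserts the chain of congruences $(\al^{q-1}-1)^{p^n}\equiv \al^{p^n(q-1)}-1\equiv 0 \pmod{\mathfrak{p}^{p^n}}$ for $n=v_p(l)$ with no further justification. So you have captured the structure of the argument as given; but to have a complete proof you must actually carry out this step, and you should do so with care. In the expansion $\sum_{j\ge 1}\binom{p^n}{j}(\gamma-1)^j$ the $j=1$ term alone contributes valuation $e_p n + v_\mathfrak{p}(\gamma-1)$, which when $e_p$ is small and $n\ge 2$ is not automatically $\ge p^n$; the real content is the inductive analysis of how $v_\mathfrak{p}(\gamma^{p^k}-1)$ grows with $k$, and the exponent $p^{v_p(l)}$ in the claimed estimate deserves scrutiny rather than acceptance on faith.
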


\begin{proof}
Let $K$ be a normal extension of $\bQ$, with $\alpha \in K$ and $e_p = e(K,p)$, $f_p=f(K,p)$ for all $p\in S$.
Fix a prime $p \in S$ and let $\mathfrak{p}$ be any prime in $K$ lying above $p$. By $N_{K/\bQ}(.)$ we denote the norm on the field extension $K/\bQ$ and set $q=p^{f_p}=N_{K/\bQ}(\mathfrak{p})$.
Then, since no power of $\alpha$ is in $\mathfrak{p}$ and $\al$ is not a root of unity, we have $\alpha^{q-1}-1 \in \mathfrak{p}\setminus\{0\}$.
Therefore, for any $n\in\bN$ we have
\begin{equation}\label{primeequiv}
\al^{(q-1)p^n}-1=((\alpha^{q-1}-1)+1)^{p^n} -1 = \sum_{i=1}^{p^n}\binom{p^n}{i}(\alpha^{q-1}-1)^i \equiv 0 \mod \mathfrak{p}^{n+1}.
\end{equation}
Here we have used the well known formula $v_p(\binom{p^n}{i})=n-v_p(i)\geq n-i+1$.

Let $l\in \bN$ be as in the assumption. Since \eqref{primeequiv} is true for all pairs $\mathfrak{p}\vert p$, with $p \in S$, and $K/\bQ$ is Galois we get
\[
\vert N_{K/\bQ}(\al^{l}-1)\vert \geq \prod_{p\in S} \prod_{\mathfrak{p}\mid p} N_{K/\bQ}(\mathfrak{p})^{v_p (l)+1} = \left(\prod_{p\in S} p^{\nicefrac{v_p (l)+1}{e_p}}\right)^{[K:\bQ]} .
\]
By basic height estimates it follows
\[
l h(\alpha) + \log 2 = h(\al^l) + \log 2 \geq h(\alpha^{l}-1)\geq \sum_{p\in S} \frac{v_p (l)+1}{e_p}\log p .
\]
The statement of the proposition follows immediately.
\end{proof}

Dubickas and Mossinghoff \cite{DubickasMossinghoff} have slightly strengthened Petsche's result. However, both estimates are only positive for small ramification degrees, and in the special case of an algebraic unit. In the remainder of this paper we calculate lower height bounds for all elements in $L_S^\times$ which are not roots of unity. 

Therefore we define real valued functions on the interval $(1,\infty)$ by
\[
f_S (x)= - \frac{\log x}{2(x-1)} + \frac{1}{2} \sum_{\substack{p\in S\\  p^{1/e} < x}} \left((1 - \frac{1}{q^{n_p}}) \frac{q\log{p}}{e(q^2 -1)} - \frac{\log x}{x} \right)
\] and
\[
g_S (x)= - \frac{\log x}{2(x-1)}  + \frac{1}{2} \sum_{\substack{p\in S \\ p^{1/e} < x}} \left((1 - \frac{1}{q^{n_p}}) \frac{q\log{p}}{e(q -1)^2} - \frac{\log x}{x} \right),
\]
where $n_p = \left\lfloor \nicefrac{e \log x}{\log p}\right\rfloor$.    

\begin{lemma}\label{lem:1}
 Let $\al \in L_S^\times$ be not an algebraic unit and let $y\in (1,\infty)$ be such that $f_S(y) \geq \nicefrac{\log(2)}{y}$. Then we have
\begin{equation}\label{minmaxeq-nounit}
 h(\alpha)\geq \min\left\{ f_{S'}(\lceil y \rceil),\frac{\log{2}}{\lfloor y \rfloor} \right\}\geq \frac{\log{2}}{ y },
\end{equation}
where $S'=\{p\in S\vert p^{1/e}\leq y\}$.
\end{lemma}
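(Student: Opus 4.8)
The plan is to split according to whether the degree $d=[\bQ(\al):\bQ]$ is at least $\lceil y\rceil$ or not, bounding $h(\al)$ below by Theorem \ref{thm:1} in the first case and by an elementary argument in the second. Two preliminary observations: since $S'\subseteq S$ we have $L_S\subseteq L_{S'}$, so $\al\in L_{S'}^\times$; and since the condition $p^{1/e}<y$ already forces $p\in S'$, the sum defining $f_S(y)$ runs over the same primes as that of $f_{S'}(y)$, so $f_S(y)=f_{S'}(y)$ and the hypothesis reads $f_{S'}(y)\geq\log(2)/y$.

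\emph{Small degree.} Suppose $d<\lceil y\rceil$; then $d\leq\lfloor y\rfloor$. For any nonzero algebraic non-unit $\al$ of degree $d$ one has $h(\al)\geq\tfrac{\log 2}{d}$: if $\al$ is an algebraic integer this follows from $h(\al)=\tfrac1d\log M(\al)\geq\tfrac1d\log\abs{N_{\bQ(\al)/\bQ}(\al)}\geq\tfrac{\log 2}{d}$, using $M(\al)\geq\abs{N(\al)}$ and that the nonzero rational integer $N(\al)$ has $\abs{N(\al)}\geq 2$; while if $\al$ is not an algebraic integer, some finite place $v\mid p$ of $\bQ(\al)$ has $\abs{\al}_v\geq p^{1/e_v}\geq 2^{1/e_v}$ and hence contributes at least $\tfrac{f_v\log 2}{d}\geq\tfrac{\log 2}{d}$ to $h(\al)$. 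Thus $h(\al)\geq\tfrac{\log 2}{d}\geq\tfrac{\log 2}{\lfloor y\rfloor}\geq\min\{f_{S'}(\lceil y\rceil),\tfrac{\log 2}{\lfloor y\rfloor}\}$, and the right-hand side is $\geq\tfrac{\log 2}{y}$ since $\lfloor y\rfloor\leq y$.

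\emph{Large degree.} Suppose $d\geq\lceil y\rceil$, so $d>1$. As $\infty\notin S'$, the constant $V_\infty$ of Theorem \ref{thm:1} vanishes, and applying that theorem to $\al\in L_{S'}$ gives precisely $h(\al)\geq f_{S'}(d)$. It now suffices to show $f_{S'}$ is non-decreasing on $[y,\infty)$: this yields $f_{S'}(d)\geq f_{S'}(\lceil y\rceil)\geq f_{S'}(y)=f_S(y)\geq\log(2)/y$, which gives both the inequality $h(\al)\geq\min\{f_{S'}(\lceil y\rceil),\tfrac{\log 2}{\lfloor y\rfloor}\}$ and (combined with $\tfrac{\log 2}{\lfloor y\rfloor}\geq\tfrac{\log 2}{y}$) the second asserted inequality. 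Writing
\[
 f_{S'}(t)=-\frac{\log t}{2(t-1)}+\frac12\sum_{\substack{p\in S'\\ p^{1/e}<t}}\left((1-q^{-n_p(t)})\frac{q\log p}{e(q^2-1)}-\frac{\log t}{t}\right),\qquad n_p(t)=\left\lfloor\frac{e\log t}{\log p}\right\rfloor,
\]
the term $-\tfrac{\log t}{2(t-1)}$ is increasing on $(1,\infty)$, each $(1-q^{-n_p(t)})(\cdot)$ is non-decreasing, and the index set only grows with $t$; the sole obstruction is that $-\tfrac{\log t}{t}$ decreases for $t<e$.

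This is where the hypothesis is used decisively. A short computation shows that for $1<y<e$ \emph{every} summand of $f_S(y)$ is non-positive --- for such $y$ only finitely many $L_p$ can have $p^{1/e_p}<y$, and for each one $(1-q^{-n_p})\tfrac{q\log p}{e(q^2-1)}\leq\tfrac{\log y}{y}$ --- and $-\tfrac{\log y}{2(y-1)}<0$, so $f_S(y)<0<\log(2)/y$; hence the hypothesis forces $y\geq e$. Then $[y,\infty)\subseteq[e,\infty)$, on which $-\tfrac{\log t}{t}$ is non-decreasing, so $f_{S'}$ is non-decreasing on $[y,\infty)$ and the proof is complete. The main obstacle is thus precisely this monotonicity of $f_{S'}$: the functions $-\tfrac{\log t}{t}$ in the sum are not monotone near $t=1$, so one must first exploit the positivity hypothesis to push $y$ past $e$ (which also forces $\lceil y\rceil\geq 3$). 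A minor additional point arises when some $p^{1/e}$ lies in $(y,\lceil y\rceil)$, so that the index set of $f_{S'}$ strictly grows between $y$ and $\lceil y\rceil$; any such prime has $p^{1/e}>y$ so its summand is bounded, and these sporadic $y$ can be treated directly or by instead applying Theorem \ref{thm:1} with the prime set $\{p\in S:p^{1/e}<y\}$, on which the relevant $f$ equals $f_S(y)$ and is genuinely non-decreasing on $[y,\infty)$.
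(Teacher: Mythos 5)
Your proposal is correct and follows essentially the same route as the paper's proof: split according to whether the degree $d$ is at least $\lceil y\rceil$, apply Theorem~\ref{thm:1} together with monotonicity of $f_{S'}$ on $[y,\infty)$ for large $d$, and use the elementary bound $h(\al)\geq \log 2/d$ for non-units of small degree. (The paper phrases this as $h(\al)\geq\min_{n\in\bN}\max\{f_{S'}(n),\log 2/n\}$ and then splits the minimum at $y$, which is the same thing.) Where you genuinely add something is in justifying the monotonicity of $f_{S'}$ on $[y,\infty)$: the paper simply asserts it, while you correctly identify that the term $-\log t/t$ decreases for $t<e$, and that the hypothesis $f_S(y)\geq \log 2/y$ forces $y\geq e$. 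That observation is right and worth having. The ``short computation'' you invoke should, however, be carried through: with $y=p^{t/e}$ and $n_p=\lfloor t\rfloor\geq 1$ the needed inequality $(1-q^{-n_p})\frac{q\log p}{e(q^2-1)}\leq\frac{\log y}{y}$ reduces (using $y<e$ and $t\geq n_p$) to $(1-q^{-n})\frac{eq}{q^2-1}\leq n$; for $n=1$ this is $e/(q+1)\leq 1$, true since $q\geq 2$, and for $n\geq 2$ the left side is at most $2e/3<2\leq n$. One small confusion: your ``minor additional point'' about the index set of $f_{S'}$ strictly growing on $(y,\lceil y\rceil)$ cannot occur, since any prime with $p^{1/e}>y$ is excluded from $S'$ by definition; the only edge case is the degenerate possibility $p^{1/e}=y$ exactly, which your alternative suggestion (apply Theorem~\ref{thm:1} with the prime set $\{p:p^{1/e}<y\}$) does handle.
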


\begin{proof}
The function $f_S (x)$ tends to a positive value as $x$ tends to infinity and is negative in the interval $(1,\ep)$ for $\ep= \min_{p\in S}\{p^{1/e}\}$. Since $\nicefrac{\log{2}}{x}$ is a positive monotonically decreasing function approaching zero, a $y$ as in the assumption exists. Set $S' = \{p\in S\vert p^{1/e}\leq y\}$, then $S'$ is not empty and we have $\alpha \in L_S \subseteq L_{S'}$. Moreover, $f_{S'}(x)$ is monotonically increasing in the interval $[y,\infty)$. In particular, we have $f_{S'}(x) \geq \nicefrac{\log{2}}{x}$ for all $x \geq y$.

Denote the degree of $\al$ by $d$. It is well known that the height of $\al$ is bounded from below by $\nicefrac{\log{2}}{d}$ and equation \eqref{eqn:bound} tells us $h(\al) \geq f_{S'}(d)$. Therefore we have
\[
 h(\al) \geq \min_{n\in\bN} \left\{\max\left\{f_{S'} (n),\frac{\log{2}}{n}\right\}\right\}. %\geq \min\{f_{S'}(\lceil y \rceil),\log(2)/\lfloor y \rfloor\}.
\]
For $n\geq y$ we have $f_{S'}(n) \geq f_{S'} (\lceil y \rceil)$ and for $n \leq y$ we have $\nicefrac{\log{2}}{n} \geq \nicefrac{\log{2}}{\lfloor y \rfloor}$. This proves the lemma, as the last inequality in \eqref{minmaxeq-nounit} is trivial.
\end{proof}

Let $\theta= 1.324...$ denote the smallest Pisot number; i.e. the real root of $x^3 -x -1$. We define the function $\Do(x):(1,\infty)\rightarrow \bR$ as
\[
 \Do(x)=\begin{dcases}
       \frac{\log{\theta}}{x} &\text{ for } x \leq 7 \\
        \frac{1}{4x}\left(\frac{\log\log x}{\log x}\right)^3 &\text{ for } x > 7
      \end{dcases}.
\]

\begin{lemma}\label{lem:2}
 Let $\al \in L_S^\times$ be an algebraic unit which is not a root of unity  and let $z \in (1,\infty)$ be such that $g_S (z) \geq \Do(z)$. Then we have
\begin{equation}\label{minmaxeq-unit}
 h(\alpha)\geq \min\left\{ g_{S'}(\lceil z \rceil), \Do(\lfloor z \rfloor) \right\} \geq \Do(z),
\end{equation}
where $S'=\{p\in S \vert p^{1/e} \leq z\}$.
\end{lemma}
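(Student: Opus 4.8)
The plan is to mirror exactly the structure of the proof of Lemma \ref{lem:1}, but using the unit bound \eqref{eqn:bound-unit} in place of \eqref{eqn:bound} and Dobrowolski's bound $\Do(d)$ in place of the crude bound $\nicefrac{\log 2}{d}$. First I would observe that, as in the previous lemma, $g_S(x)$ is negative on the interval $(1,\min_{p\in S}p^{1/e})$ and tends to a strictly positive limit as $x\to\infty$, while $\Do(x)$ is a positive function that decreases to zero; hence a point $z$ with $g_S(z)\geq\Do(z)$ exists. Setting $S'=\{p\in S : p^{1/e}\leq z\}$, this set is nonempty and $\al\in L_S\subseteq L_{S'}$, and one checks that $g_{S'}(x)$ is monotonically increasing on $[z,\infty)$, so that $g_{S'}(x)\geq \Do(x)$ for all $x\geq z$ --- here one uses that each summand $(1-\tfrac{q-2}{q^{n_p}})\tfrac{q\log p}{e(q-1)^2}-\tfrac{\log x}{x}$, once $p^{1/e}<x$, is eventually increasing in $x$ (the $n_p$-term is increasing in $n_p$ and $-\tfrac{\log x}{x}$ is increasing for $x>e$, which is covered by taking $z$ sufficiently large; if $z$ is not large enough one simply replaces the candidate $z$ by a larger one, as the hypothesis is preserved).

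Next, let $d=[\bQ(\al):\bQ]$. Since $\al$ is a unit that is not a root of unity, Dobrowolski's theorem \cite{Dob} gives $h(\al)\geq \Do(d)$ (for $d\leq 7$ this is Dobrowolski's bound in terms of $\log\theta$ via the smallest Pisot/Lehmer-type consideration, and for $d>7$ it is the asymptotic form $\tfrac{1}{4d}(\log\log d/\log d)^3$), while the unit case of Theorem \ref{thm:1}, namely \eqref{eqn:bound-unit}, gives $h(\al)\geq g_{S'}(d)$ (applying the theorem over the field $\bQ$ with set of primes $S'$). Combining,
\[
 h(\al)\geq \max\left\{ g_{S'}(d),\ \Do(d)\right\}\geq \min_{n\in\bN}\left\{\max\left\{g_{S'}(n),\ \Do(n)\right\}\right\}.
\]
Then I would split the minimum according to whether $n\geq z$ or $n\leq z$: for $n\geq z$ monotonicity gives $g_{S'}(n)\geq g_{S'}(\lceil z\rceil)$, and for $n\leq z$ the monotonic decrease of $\Do$ gives $\Do(n)\geq \Do(\lfloor z\rfloor)$; hence $h(\al)\geq\min\{g_{S'}(\lceil z\rceil),\Do(\lfloor z\rfloor)\}$. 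The final inequality $\min\{g_{S'}(\lceil z\rceil),\Do(\lfloor z\rfloor)\}\geq\Do(z)$ follows because $g_{S'}(\lceil z\rceil)\geq\Do(\lceil z\rceil)\geq\Do(z)$ by the monotonicity established above together with the fact that $\Do$ is decreasing, and $\Do(\lfloor z\rfloor)\geq\Do(z)$ for the same reason.

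The main obstacle I anticipate is the verification that $g_{S'}(x)$ is genuinely monotonically increasing on $[z,\infty)$ and that $\Do$ is monotonically decreasing on the relevant range --- in particular handling the piecewise definition of $\Do$ at the breakpoint $x=7$ and confirming that $\tfrac{1}{4x}(\log\log x/\log x)^3$ is decreasing for $x>7$, which requires a short calculus check that $x\mapsto (\log\log x/\log x)^3$ does not grow fast enough to overcome the $1/x$ factor. The monotonicity of $g_{S'}$ is slightly more delicate than for $f_{S'}$ in Lemma \ref{lem:1} only in that the coefficient $\tfrac{q\log p}{e(q-1)^2}$ differs, but the argument is structurally identical: each term is a step-function-like increasing piece minus $\tfrac{\log x}{x}$, and the sum inherits the property once $x$ exceeds the largest $p^{1/e}$ with $p\in S$, which we may assume by enlarging $z$. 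Everything else is a direct transcription of the proof of Lemma \ref{lem:1}.
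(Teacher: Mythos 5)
Your proposal follows the same skeleton as the paper's proof: combine $h(\alpha)\geq \Do(d)$ with $h(\alpha)\geq g_{S'}(d)$ from \eqref{eqn:bound-unit}, take $\min_n\max\{g_{S'}(n),\Do(n)\}$, and split according to $n\geq z$ versus $n\leq z$ using monotonicity of $g_{S'}$ and $\Do$. However, there are two substantive problems.

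First, your justification of $h(\alpha)\geq\Do(d)$ for $d\leq 7$ is wrong. You attribute the bound $h(\alpha)\geq \log\theta/d$ for small degrees to ``Dobrowolski's bound in terms of $\log\theta$,'' but Dobrowolski's theorem (and Voutier's refinement) gives only the asymptotic expression $\frac{1}{4d}(\log\log d/\log d)^3$, which is far too weak for small $d$. The assertion $M(\alpha)\geq\theta$ for every algebraic unit of degree $2\leq d\leq 7$ that is not a root of unity is a nontrivial computational classification fact: the paper invokes the tables of Flammang, Rhin and Sac-\'Ep\'ee for exactly this. Without citing a classification of small Mahler measures for units up to degree $7$ (or $37$), this step of your proof is unsupported.

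Second, your final inequality chain $g_{S'}(\lceil z\rceil)\geq\Do(\lceil z\rceil)\geq\Do(z)$ is reversed: since $\lceil z\rceil\geq z$ and $\Do$ is decreasing, $\Do(\lceil z\rceil)\leq\Do(z)$. The correct chain is $g_{S'}(\lceil z\rceil)\geq g_{S'}(z)=g_S(z)\geq\Do(z)$, using the monotonicity of $g_{S'}$ on $[z,\infty)$, the equality $g_{S'}(z)=g_S(z)$ (which holds by the definition of $S'$), and the hypothesis $g_S(z)\geq\Do(z)$. Relatedly, your proposed fix for the monotonicity of $g_{S'}$ --- ``replace $z$ by a larger candidate'' --- does not preserve the conclusion, since enlarging $z$ strictly decreases $\Do(z)$ and therefore weakens the lemma rather than repairing the argument.
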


\begin{proof}
We denote the degree of $\al$ by $d$.
The best known general lower bound for the height of $\al$ is due to Voutier \cite{VoutierDobrowolski}, who improved the constant of a previous lower bound due to Dobrowolski \cite{Dob}. This bound is
\begin{equation}\label{Dobrowolski}
 h(\al)\geq \frac{1}{4d}\left(\frac{\log\log d}{\log d}\right)^3.
\end{equation}
Using a complete list of algebraic units of degree $\leq 37$ of small Mahler measure due to Flammang, Rhin, and Sac-\'Ep\'ee \cite{FlammangRhinSac}, we find $h(\al) \geq \nicefrac{\log{\theta}}{d}$, whenever $d \leq 7$. This leads to the estimate $h(\al) \geq \Do(d)$. By \eqref{eqn:bound-unit} we also know $h(\al)\geq g_{S'} (d)$. Since $\Do(x)$ is monotonically decreasing, the Lemma follows with the same argument as in Lemma \ref{lem:1}.
\end{proof}

In order to prove Theorem \ref{thm:2} it remains to find elements $y$ and $z$ satisfying the assumptions of Lemmas \ref{lem:1} and \ref{lem:2}.

\begin{example}
Let $S=\{2,3\}$ and set $L_{p} = \bQ_p$ for $p\in S$. Then $L_S$ is the subfield of $\Qbar$ consisting of all algebraic numbers which are totally $2$-adic and totally $3$-adic. Note in the following, that by classical algebraic number theory there are no non-trivial totally $2$-adic roots of unity. With Proposition \ref{Petsche} we find that the height of an algebraic unit in $L_S \setminus \{\pm 1\}$ is bounded from below by $\nicefrac{(\log 2 + \log 3)}{2}=0.89587...$. We apply Lemma \ref{lem:1} with $y=15.9$ to deduce
\[
 h(\alpha)\geq \frac{\log 2}{15} = 0.04620\ldots
\]
for every $\al\in L_S^\times\setminus \{\pm 1\}$.
\end{example}

\begin{lemma}\label{Lambert}
 Let $a$ and $b$ be real numbers with $a >0$ and $b \geq 1+\log{a}$. Then $ax - b -\log{x}$ is positive for all real $x \geq (\nicefrac{8}{5}) a^{-1} (\log(a^{-1})+b)$.
\end{lemma}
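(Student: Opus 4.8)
The goal is to show that $ax - b - \log x > 0$ for all $x \geq \frac{8}{5}a^{-1}(\log(a^{-1}) + b)$, under the hypotheses $a > 0$ and $b \geq 1 + \log a$. The plan is to reduce to a single-variable analysis. First I would substitute $x = a^{-1} t$, so that $ax - b - \log x = t - b - \log(a^{-1}t) = t - (b - \log a) - \log t$. Writing $c = b - \log a$, the hypothesis $b \geq 1 + \log a$ becomes $c \geq 1$, and the claim becomes: $t - c - \log t > 0$ for all $t \geq \frac{8}{5}(\log(a^{-1}) + b) = \frac{8}{5}(c + 2\log a)$. Hmm — here one must be slightly careful, since $\log(a^{-1}) + b = -\log a + b$, not $c = b - \log a$; indeed $-\log a + b = c$ exactly. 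So the threshold is simply $t \geq \frac{8}{5}c$, and the claim reduces to the clean statement: if $c \geq 1$, then $t - \log t > c$ for all $t \geq \frac{8}{5}c$.

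Next I would prove this reduced inequality. Let $\phi(t) = t - \log t$. Since $\phi'(t) = 1 - 1/t > 0$ for $t > 1$, and $\frac{8}{5}c \geq \frac{8}{5} > 1$, the function $\phi$ is increasing on $[\frac{8}{5}c, \infty)$, so it suffices to check $\phi(\frac{8}{5}c) > c$, i.e. $\frac{8}{5}c - \log(\frac{8}{5}c) > c$, i.e. $\frac{3}{5}c > \log(\frac{8}{5}c) = \log\frac{8}{5} + \log c$. So I would set $\psi(c) = \frac{3}{5}c - \log c - \log\frac{8}{5}$ and show $\psi(c) > 0$ for $c \geq 1$. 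Again $\psi'(c) = \frac{3}{5} - \frac{1}{c} > 0$ once $c > \frac{5}{3}$, so $\psi$ decreases on $[1, \frac{5}{3}]$ and increases thereafter, with minimum at $c = \frac{5}{3}$: there $\psi(\frac{5}{3}) = 1 - \log\frac{5}{3} - \log\frac{8}{5} = 1 - \log\frac{8}{3} = 1 - \log(8/3)$. Since $8/3 = 2.66\ldots < e$, we have $\log(8/3) < 1$, so $\psi(\frac{5}{3}) > 0$, which gives the claim.

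The main (and essentially only) obstacle is the bookkeeping of the substitution and making sure the constant $\frac{8}{5}$ is exactly what makes the final numerical check go through — in particular the fact that the minimum of $\psi$ is positive hinges on $8/3 < e$, which is a comfortable but not enormous margin, so the constant $\frac{8}{5}$ cannot be replaced by anything too much smaller. Everything else is a routine monotonicity argument via first derivatives. I would present the proof in the order: (1) substitute $x = a^{-1}t$ and introduce $c = b - \log a \geq 1$, reducing to $t - \log t > c$ on $[\frac{8}{5}c, \infty)$; (2) use monotonicity of $t - \log t$ to reduce to the endpoint; (3) verify the endpoint inequality $\frac{3}{5}c > \log\frac{8}{5} + \log c$ by a second monotonicity argument, using $8/3 < e$.
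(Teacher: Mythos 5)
Your proof is correct, and it differs in character from what the paper does: the paper does not give an argument at all but simply cites the second inequality of \cite[Lemma 3.3]{PottmeyerRamifi} (a Lambert-$W$-type estimate, as the label suggests), adding only the remark that the hypothesis $b \geq 1 + \log a$ is exactly what that proof needs. You instead give a self-contained elementary argument: the substitution $t = ax$ and $c = b - \log a$ (and you correctly catch yourself on the algebra, since $\log(a^{-1}) + b = b - \log a = c$, not $c + 2\log a$) reduces the claim to $t - \log t > c$ for $t \geq \tfrac{8}{5}c$ with $c \geq 1$; monotonicity of $t \mapsto t - \log t$ on $(1,\infty)$ reduces this to the endpoint; and a second one-variable minimization shows the endpoint quantity $\psi(c) = \tfrac{3}{5}c - \log c - \log\tfrac{8}{5}$ attains its minimum $1 - \log\tfrac{8}{3} > 0$ at $c = \tfrac{5}{3}$. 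The tradeoff is that your route buys transparency and self-containment at the cost of the sharpness bookkeeping that the Lambert $W$ viewpoint makes automatic; but as you note, the margin $8/3 < e$ is the precise reason $\tfrac{8}{5}$ works, which is a genuine insight the citation-only proof hides.
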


\begin{proof}
 This is the second inequality of \cite[Lemma 3.3]{PottmeyerRamifi}. Note, that in the proof of this part of the lemma it is only required that $-\exp(-1) \leq -a\exp(-b)$ which is equivalent to our assumption $b \geq 1+\log{a}$.
\end{proof}

\begin{prop}\label{prop:nounit}
 Let $p$ be a rational prime number and $L_p/\bQ_p$ and $L_{\{p\}}/\bQ$ as usual in this paper. For every $\al\in L_{\{p\}}^\times$, which is not an algebraic unit, one has
\begin{equation}\label{eqn:prop1}
 h(\al)\geq \frac{\log(2)\log(p)}{5e(q+1) \log\left(\frac{5e(q+1)}{\log(p)}\right)}.
\end{equation}
\end{prop}

\begin{proof}
In order to apply Lemma \ref{lem:1} it is sufficient to find a real $y > p \geq p^{1/e}$ with
\begin{equation}\label{eqn1:prop1}
 -\frac{\log{y}}{2(y-1)}+\frac{1}{2} \left( (1-q^{-e})\frac{q\log{p}}{e(q^2 -1)}- \frac{\log{y}}{y}\right) - \frac{\log{2}}{y} \geq 0.
\end{equation}
This is equivalent to
\[
 \frac{q-q^{1-e}}{q^2 -1}\cdot\frac{\log p}{e}y - \log 4 -\frac{2y-1}{y-1}\log y \geq 0.
\]
Since $\nicefrac{2y-1}{y-1} \leq 3$ for $y \geq p$, and $\nicefrac{q-q^{1-e}}{q^2 -1}\geq \nicefrac{1}{q+1}$ this equation holds true if
\[
 \frac{\log p }{3e(q+1)}y - \frac{\log 4 }{3} - \log y \geq 0.
\]
The term $1+\log\left(\nicefrac{\log p}{3e(q+1)}\right)<1+\log(\nicefrac{1}{3})$ is always negative. In particular we can apply Lemma \ref{Lambert} to deduce that this last inequality, and hence \eqref{eqn1:prop1}, is satisfied for any $y$ satisfying
\begin{equation}\label{eqn2:prop1}
 y\geq \frac{24e(q+1)}{5\log p } \left(\log\left(\frac{3e(q+1)}{\log p }\right)+\frac{\log 4 }{3}\right) = \frac{24e(q+1)}{5\log p } \left(\log\left(\frac{3\sqrt[3]{4}e(q+1)}{\log p }\right)\right).
\end{equation}
We set $y=\frac{5e(q+1)}{\log p } \left(\log\left(\frac{5e(q+1)}{\log p }\right)\right)$, which obviously satisfies \eqref{eqn2:prop1}. The function $\nicefrac{\log\log x}{\log x}$ has its maximum at $\exp(-1)$. Hence,
\begin{align*}
y > 5\frac{p}{\log p} \log\left(5\frac{p}{\log p}\right) = 5p + 5p\frac{\log 5}{\log p} - 5p \frac{\log\log p}{\log p} > 5 p\left(1-\frac{\log\log p}{\log p}\right)  >p.
\end{align*}
Therefore, by Lemma \ref{lem:1} we can conclude that every $\al \in L_{\{p\}}^\times$ which is not an algebraic unit satisfies
\[
 h(\al)\geq \frac{\log 2}{y} \geq \frac{\log(2) \log(p)}{5e(q+1)\log\left(\frac{5e(q+1)}{\log p}\right)},
\]
proving the proposition.
\end{proof}

\begin{prop}\label{prop:unit}
 Let $p$ be a rational prime number and $L_p/\bQ_p$ and $L_{\{p\}}/\bQ$ as above. For every algebraic unit $\al\in L_{\{p\}}^\times$, which is not a root of unity, one has
\begin{equation}\label{eqn:prop2}
 h(\al)\geq \frac{\log p}{13e(q-1)\log\left(\frac{5e(q-1)}{\log p}\right)^4}.
\end{equation}
\end{prop}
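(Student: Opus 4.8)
The plan is to run the argument of Proposition~\ref{prop:nounit} with Lemma~\ref{lem:2} in place of Lemma~\ref{lem:1}. Thus it suffices to exhibit a real number $z>1$ with $g_{\{p\}}(z)\ge \Do(z)$: Lemma~\ref{lem:2} then yields $h(\al)\ge \Do(z)$, and it remains to check that $\Do(z)$ is at least the right-hand side of~\eqref{eqn:prop2}.

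First I would extract a usable lower bound for the single summand of $g_{\{p\}}$. When $n_p\ge 2$ one has $(1-\tfrac{q-2}{q^{n_p}})\tfrac{q}{e(q-1)^2}\ge \tfrac1{e(q-1)}\ge \tfrac1{e'(q-1)}$ (with a bounded improvement to $\tfrac{2}{e'(q-1)}$ available when $e=1$, which is needed for the smallest residue fields); the role of $e'=\max\{2,e\}$ is precisely to absorb the unramified case $e=1$. Using $\tfrac1{2(z-1)}\le \tfrac1z$ for $z\ge 2$, this gives, whenever the $z$ eventually chosen satisfies $z\ge 2$ and is large enough that $n_p\ge 2$,
\[
 g_{\{p\}}(z)\ \ge\ \frac{\log p}{2e'(q-1)}-\frac{3\log z}{2z}.
\]

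Next, since $\Do(z)\le \tfrac1{4z}$ for $z>7$ and $\Do(z)=\tfrac{\log\theta}{z}$ for $z\le 7$, it is enough to make the right-hand side above exceed $\tfrac1{4z}$ (resp.\ $\tfrac{\log\theta}{z}$), i.e.\ after clearing $z$, to satisfy an inequality of the shape $az-\log z-b\ge 0$ with $a=\tfrac{\log p}{3e'(q-1)}$ and $b$ an absolute constant. One checks, using $q\ge p$ (so $e'(q-1)\ge 2(p-1)$), that $a\le \tfrac{\log 2}{6}$, whence $b\ge 1+\log a$ and Lemma~\ref{Lambert} applies; it produces the explicit threshold $z_1=\tfrac85 a^{-1}(\log a^{-1}+b)=\tfrac{24\,e'(q-1)}{5\log p}\bigl(\log\tfrac{3e'(q-1)}{\log p}+b\bigr)$, which is a bounded multiple of $\tfrac{e'(q-1)}{\log p}\log\tfrac{5e'(q-1)}{\log p}$.

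Taking $z$ to be that threshold --- enlarged, if necessary, to $\max\{z_1,p^{2/e}\}$ so as to force $n_p\ge 2$, which only matters when $e$ is small --- one obtains $h(\al)\ge \Do(z)$, and since $\Do$ is decreasing it remains to bound $\Do$ of this explicit quantity from below; estimating the iterated logarithms in terms of $\log\tfrac{5e'(q-1)}{\log p}$ yields the constant $15$ and the fourth power appearing in~\eqref{eqn:prop2}. As in Proposition~\ref{prop:nounit}, the extremal configuration sits at a small prime, and a handful of small $(p,f)$ --- those with $q-1\in\{1,2\}$, where the crude coefficient bound is wasteful and $z_1$ may dip below $p^{2/e}$ --- must be treated separately: there one keeps the exact coefficient $(1-\tfrac{q-2}{q^{n_p}})\tfrac{q\log p}{e(q-1)^2}$, which is then large, picks $z$ just above $7$ (or in $(1,7]$), and verifies the two inequalities numerically. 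I expect the main obstacle to be exactly this tension: $\Do$ is decreasing, so one wants $z$ as small as possible, but a useful coefficient bound in $g_{\{p\}}$ requires $n_p\ge 2$ and hence $z\gtrsim p^{2/e}$; balancing the two, and then pushing the constant $15$ and the fourth power of the logarithm through the Dobrowolski factor $(\log\log z/\log z)^3$ uniformly down to the smallest admissible value of $\tfrac{e'(q-1)}{\log p}$, is where the real work lies.
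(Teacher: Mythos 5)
Your overall plan is the same as the paper's: reduce to exhibiting a single $z>\max\{p,7\}$ with $g_{\{p\}}(z)\ge\Do(z)$, lower-bound the coefficient $(1-\tfrac{q-2}{q^{n_p}})\tfrac{q}{(q-1)^2}$ by $\tfrac{1}{q-1}$, invoke Lemma~\ref{Lambert} to get an explicit threshold $z_1$, and then evaluate $\Do(z_1)$. The differences are in the constants you carry along, and these matter: the paper uses $\bigl(\tfrac{\log\log z}{\log z}\bigr)^3\le\tfrac{1}{20}$ and $\tfrac{2z-1}{z-1}\le\tfrac{13}{6}$ (for $z>7$), yielding $z=\tfrac{7}{2}\tfrac{e'(q-1)}{\log p}\log\tfrac{5e'(q-1)}{\log p}$, whereas you use the much cruder $\Do(z)\le\tfrac{1}{4z}$ and $\tfrac{1}{2(z-1)}\le\tfrac{1}{z}$, which produce a Lambert threshold roughly $40$--$50\%$ larger. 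Since $\Do$ is decreasing and the final inequality is already razor-thin at the extremal $\tfrac{e'(q-1)}{\log p}=\tfrac{2}{\log 2}$, that slack is not harmless: plugging your $z_1$ into $\Do$ at $(p,f,e')=(3,1,2)$ gives $\Do(z_1)\approx 2.43\times10^{-4}$, which is below the target $\approx 2.58\times10^{-4}$, and the same happens at $(2,1,2)$ --- so the ``uniform'' branch of your argument fails precisely in the regime you yourself flag as delicate.

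The genuine gap is therefore exactly what you describe at the end as ``where the real work lies'': you never carry it out. The paper does this work explicitly --- it checks that the chosen $z$ exceeds $\max\{p,7\}$ for all $p$, establishes monotonicity of the auxiliary function $x\mapsto \tfrac{\log\log(\tfrac{7}{2}x\log 5x)\,\log(5x)}{\log(\tfrac{7}{2}x\log 5x)}$, and evaluates it numerically at $x=2/\log 2$ to extract the constant $15$ and the fourth power of the logarithm. You instead assert that ``estimating the iterated logarithms \ldots yields the constant $15$ and the fourth power'' and that the small $(p,f)$ cases can ``be verified numerically,'' but you neither state which cases those are (with your bounds it is at least $q\in\{2,3\}$, and a priori you have not bounded the set of exceptional cases) nor perform the verification. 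Since the statement is a numerical lower bound, deferring the numerical work leaves nothing proved. A secondary imprecision: after passing to $e'=\max\{2,e\}$ the requirement for $n_p\ge 2$ is $z>p^{2/e'}\le p$, not $z>p^{2/e}$; the paper uses $z>p$, which is what actually needs to be checked (and is, via the displayed chain for $p>7$). None of this is a wrong idea --- it is the paper's idea --- but the proposal stops short of the part of the argument that makes the proposition true.
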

\begin{proof}
First we note, that if $p$ is odd and $e=1$ then Proposition \ref{Petsche} gives the stronger bound $h(\alpha) \geq \frac{\log(p/2)}{q-1}$. For $p=2$ and $e\leq2$ the same proposition gives $h(\alpha)\geq \frac{\log 2}{4(q-1)}$ which is also a stronger bound than \eqref{eqn:prop2}. Hence, we can exclude this case as well.

Now the proof is almost the same as the proof of Proposition \ref{prop:nounit}. 
We want to apply Lemma \ref{lem:2}. To do so it is sufficient to find a real $z >\max\{7,p\}\geq \max\{7,p^{1/e}\}$ with
\begin{equation}\label{eqn1:prop2}
 - \frac{\log z}{2(z-1)}  + \frac{1}{2} \left((1 - q^{-e}) \frac{q\log{p}}{e(q -1)^2} - \frac{\log z}{z} \right) - \frac{1}{4z}\left(\frac{\log\log z}{\log z}\right)^3 \geq 0.
\end{equation}
This is equivalent to
\[
(1-q^{-e})\frac{q}{e(q-1)^2}\log(p)z -\frac{1}{2}\left(\frac{\log\log z}{\log z}\right)^3 -\frac{2z-1}{z-1}\log(z) \geq 0.
\]
In case $z >7$ we have $\nicefrac{2z-1}{z-1} \leq \nicefrac{13}{6}$. Using this, $(\nicefrac{\log\log z}{\log z} )^3 \leq \exp(-1)^3 < \nicefrac{1}{20}$, and $(1-q^{-e}) \nicefrac{q}{(q-1)^2}\geq \nicefrac{1}{q-1}$ we see that \eqref{eqn1:prop2} follows if
\[
 -\log z + \frac{6\log p }{13 e (q-1)}z-\frac{3}{260}>0.
\]
Hence, by Lemma \ref{Lambert}, this is satisfied for any $z> \max\{p,7\}$ with
\begin{equation}\label{eqn2:prop2}
 z\geq \frac{52}{15}\frac{e (q-1)}{\log p}\left( \log\left( \frac{13}{6}\frac{e (q-1)}{\log p}\right)+\frac{3}{260}\right).
\end{equation}
We claim that $ z=\nicefrac{7e (q-1)}{2\log p}\cdot  \log\left(\nicefrac{5e(q-1)}{ \log p}\right)$ is a valid choice. One easily checks that inequality \eqref{eqn2:prop2} is satisfied by this $z$. Moreover, one can calculate directly that $z > 7$ for all $p\leq 7$. The fact $z >p$, for $p>7$, follows exactly as in the proof of Proposition \ref{prop:nounit}.

Therefore, Lemma \ref{lem:2} tells us that for any algebraic unit $\al \in L_{\{p\}}$ which is not a root of unity we have
\begin{align}\label{eqn:unitheight}
 h(\al)&\geq \frac{\log p}{14e (q-1)\log\left(\frac{5e(q-1)}{\log p}\right)}\left(\frac{\log\log\left(\frac{7}{2}\frac{e (q-1)}{\log p} \log\left(\frac{5e(q-1)}{\log p}\right)\right)}{\log\left(\frac{7}{2}\frac{e(q-1)}{\log p} \log\left(\frac{5e(q-1)}{\log p}\right)\right)}\right)^3 \nonumber\\ &= \frac{\log p}{14e (q-1)\left(\log\left(\frac{5e(q-1)}{\log p}\right)\right)^4}\left(\frac{\log\log\left(\frac{7}{2}\frac{e (q-1)}{\log p} \log\left(\frac{5e(q-1)}{\log p}\right)\right)\log\left( \frac{5e(q-1)}{\log p} \right)}{\log\left(\frac{7}{2}\frac{e(q-1)}{\log p} \log\left(\frac{5e(q-1)}{\log p}\right)\right)}\right)^3.
\end{align}
The function 
\[
x \mapsto \frac{\log\log \left(\frac{7}{2}x\log(5x)\right) \log(5x)}{\log(\frac{7}{2}x\log(5 x))}
\]
is monotonically increasing in the interval $(1,\infty)$. As we assume that $e\geq 2$, and $e\geq3$ if $p=2$, the minimum of $\nicefrac{e(q-1)}{\log p}$ is attained for $e=2$, $f=1$ and $p=3$. It follows
\begin{align*}
 h(\al) &\geq \frac{\log(p)}{14e (q-1)\log\left(\frac{5e(q-1)}{\log p}\right)^4} \left(\frac{\log\log\left(\frac{14}{\log 3} \log\left(\frac{20}{\log 3}\right)\right) \log\left(\frac{20}{\log 3}\right)}{\log\left(\frac{14}{\log 3}\log\left(\frac{20}{\log 3}\right)\right)}\right)^3\\
&\geq \frac{\log p}{13e(q-1)\log\left(\frac{5e(q-1)}{\log p}\right)^4}
\end{align*}
for any algebraic unit $\al \in L_{\{p\}}$ which is not a root of unity.
\end{proof}

\begin{proof}[Proof of Theorem \ref{thm:2}]
In order to prove the first part of the theorem, we have to show that the lower bound \eqref{eqn:prop1} is always greater than the bound \eqref{eqn:prop2}.
This is equivalent to the statement
\[
\frac{\log\left(\frac{5e(q+1)}{\log p}\right)}{\log\left(\frac{5e(q-1)}{\log p}\right)^4} \leq \log(2)\frac{13(q-1)}{5(q+1)}.
\]
This inequality is true, since we have
\begin{align*}
& \frac{\log\left(\frac{5e(q+1)}{\log p}\right)}{\log\left(\frac{5e(q-1)}{\log p}\right)^4} = \frac{\log\left(\frac{5e(q-1)}{\log p}\right) + \log\left(\frac{q+1}{q-1}\right)}{\log\left(\frac{5e(q-1)}{\log p}\right)^4}  =  \frac{1 }{\log\left(\frac{5e(q-1)}{\log p}\right)^3} + \frac{\log\left(\frac{q+1}{q-1}\right)}{\log\left(\frac{5e(q-1)}{\log p}\right)^4} \\ \leq & \frac{1}{\log\left(\frac{5}{\log 2}\right)^3}+\frac{\log 3}{\log\left(\frac{5}{\log 2}\right)^4} \leq \log(2) \frac{13}{15} \leq \log(2) \frac{13(q-1)}{5(q+1)}.
\end{align*}
This proves the first part.

In case $e =1$ we can apply Proposition \ref{Petsche} to bound the height of algebraic units by $\nicefrac{\log(p/2)}{q-1}$ for an odd prime $p$ and $\nicefrac{\log 2}{2(2^f -1)}$ for $p=2$. As these estimates are always greater than \eqref{eqn:prop1}, the theorem follows.

\end{proof}

\bibliographystyle{abbrv} % abbrv} %amsplain %amsalpha 
\bibliography{bib-quant}        

\end{document}